\definecolor{mygreen}{rgb}{0,0.6,0}
\definecolor{codegreen}{rgb}{0,0.6,0}
\definecolor{codegray}{rgb}{0.5,0.5,0.5}
\definecolor{codepurple}{rgb}{0.58,0,0.82}
\definecolor{backcolour}{rgb}{0.95,0.95,0.95}
\definecolor{keywordblue}{rgb}{0.13, 0.13, 1}
\lstdefinestyle{mystyle}{
    backgroundcolor=\color{backcolour},   
    commentstyle=\color{codegreen},
    keywordstyle=\color{keywordblue}\bfseries,
    numberstyle=\tiny\color{codegray},
    stringstyle=\color{codepurple},
    basicstyle=\ttfamily\small,
    breakatwhitespace=false,         
    breaklines=true,                 
    captionpos=b,                    
    keepspaces=true,                 
    numbers=left,                    
    numbersep=5pt,                  
    showspaces=false,                
    showstringspaces=false,
    showtabs=false,                  
    tabsize=2,
    language=Mathematica,
    frame=single, 
    rulecolor=\color{black!30},
    mathescape=true, 
    morekeywords={Reduce, Reals} 
}
\pgfplotsset{compat=1.18}
\newtheorem{theorem}{Theorem}[section]
\newtheorem*{theorem*}{Theorem}
\newtheorem*{remark*}{Remark}
\newtheorem{corollary}{Corollary}[section]
\newtheorem{proposition}[theorem]{Proposition}
\newtheorem{definition}{Definition}[section]
\newcommand\JP[1]{\langle#1\rangle}
\newcommand\norm[1]{\left\|#1\right\|}
\newcommand\Inpro[3]{\langle #1,#2\rangle_{#3}}
\newcommand\RE{\text{Re}}
\newcommand{\R}{\mathbb{R}}
\newcommand{\C}{\mathbb{C}}
\newcommand{\Z}{\mathbb{Z}}
\def\d{\mathrm{d}}
\begin{document}
	
	\numberwithin{equation}{section}
	
	\title{Strichartz estimate for discrete Schr\"odinger equation on layered King's grid}
	
	\author[Wan]{Zhiqiang Wan}
	\address{School of Mathematical Sciences, University of Science and Technology of China, Hefei}
	\email{ZhiQiang\_Wan576@mail.ustc.edu.cn}
	\author[Zhang]{Heng Zhang}
	\address{School of Mathematical Sciences, University of Science and Technology of China, Hefei}
	\email{hengz@mail.ustc.edu.cn}

	\keywords{}
	\subjclass[2020]{}
	\thanks{}

	\date{\today}

	\begin{abstract}
		We establish the sharp \( l^1 \to l^{\infty} \) decay estimate for the discrete Schr\"odinger equation (DS) on the Layered King's Grid (LKG), with a dispersive decay rate of \( \langle t \rangle^{-13/12} \), which is faster than that for $3$-dimensional lattice (\( \langle t \rangle^{-1} \), see \cite{SK05}). This decay estimate enables us to derive the corresponding Strichartz estimate via the standard Keel--Tao argument. Our approach relies on using techniques from Newton polyhedra to analyze singularities.
		
	\end{abstract}

	\maketitle
	
	\section{Introduction } \label{intro}
	The Schr\"odinger equations, particularly those on discrete spaces such as certain graphs, are fundamental dispersive equations that have a wide range of applications in physical models\cite{BCFMMI02, CLS03, FW12, FW20, KAT01, LFO06, MPAES99, OOSB01}. Let $G=(V, E)$ be a simple connected graph. The discrete Laplacian on $G$ is defined as:
	\begin{equation*}
		\Delta_{\mathrm{disc}}f(x) := \sum_{y\sim x} (f(y) - f(x)),
	\end{equation*}
	where $y\sim x$ indicates that $y$ and $x$ are adjacent vertices in $G$. 
	We are concerned with the dispersive  estimate for the discrete Schr\"odinger equation on $G$, that is, 
	\begin{equation}\label{eq:DS}
		\begin{cases}
			\left(\partial_{t}-i\Delta_{\mathrm{disc}}\right)u\left(x,t\right)=0&\mathrm{for}\;\left(x,t\right)\in V\times\R_{>0},\\
			u\left(x,0\right)=u_{0}\left(x\right)&\mathrm{for}\;\;x\in V.
		\end{cases}
	\end{equation}
	where $u:{V}\times\R_{\geq0}\to\C$,  and $u_{0}$ is the initial data.
	Demonstrating dispersion in discrete spaces often involves establishing a decay estimate for the $ l^\infty $ norm of the solution at time $ t $, expressed in terms of some negative power of $ t $ and the $ l^1 $ norm of the initial data, namely $ l^1 \to  l^\infty$ estimate(or $ L^1 \to  L^\infty$ estimate in the continuous space). It has the following form:
	\begin{align}\label{Strichartz estimate}
		\|u(x,t)\|_{l^\infty(V)}\le C\left\langle t\right\rangle^{-\sigma}\|u_0\|_{l^1(V)},
	\end{align}
where $\left\langle t\right\rangle$ denotes the Japanese bracket, defined as $\left\langle t\right\rangle:=\sqrt{1+t^{2}}$, and $C$ is a constant that is independent of $t$. Combining this decay estimate with an abstract functional analysis argument, i.e., the \( TT^* \) argument yield a variety of inequalities involving space-time Lebesgue norms, called Strichartz estimate, which was first introduced in \cite{S77}.
	
In the Euclidean space $\R^d$, the $L^1 \to L^{\infty}$ estimate is well-known to hold with $\sigma = d/2$, see, for example, \cite{S77, T06}. However, in the $d$-dimensional lattice, the sharp $l^1 \to l^{\infty}$ estimate of equation \eqref{eq:DS} holds with $\sigma = d/3$, which is slower than the corresponding estimate in $\R^d$, as shown in \cite[Proposition 1]{SK05}. The similar dispersive estimates and corresponding applications for other related discrete equations can be found in \cite{B13, CT09, EKT15, CI21, KPS09, MP12, PS08}.  The study of the Schr\"odinger equation on Cayley graphs is a particularly fruitful area of research, as the inherent group symmetry can be exploited by tools from harmonic analysis to decompose the Schrödinger equation, thereby revealing deep connections among the algebraic properties of the group, the geometry of the graph, and the operator's spectrum (see, e.g., \cite{CT09, EKT15, SK05, PS08, GHJZ25}).
	
	A natural line of inquiry is to examine the $l^1 \to l^{\infty}$ dispersive decay on other Cayley graphs with vertex set  $\mathbb{Z}^d$. In a recent work, Ge, Hua, Jia, and Zhou \cite{GHJZ25} investigated this for the hexagonal lattice $T_H$, viewed as the Cayley graph of $\mathbb{Z}^2$ with the generating set ${\pm(1,0), \pm(0,1), \pm(1,1)}$. Their analysis revealed that the decay rate is $\langle t \rangle^{-3/4}$, a notable acceleration over the classical $\langle t \rangle^{-2/3}$ rate for the standard square lattice.

	We consider the Strichartz estimate on Layered King's Grid  (LKG), which has  meaningful geometric and physical interpretations, defined as the Cayley graph of $\Z^3$ and the edge set $E$ given by $E = \left\{ \{u, v\} \mid u - v \in \{(\pm 1,0,0),(0,\pm 1,0), (0,0,\pm1),  (\pm 1,\pm 1,0)\} \right\}$.  LKG is an ideal model for anisotropic cellular automata. It combines complex 2D Moore-like interactions within layers with simpler 1D von Neumann-like dynamics between them. This structure is perfect for modeling layered phenomena, like crystal growth, offering a more realistic framework than purely isotropic models. For more about cellular automata, we refer to \cite{CC10, TM95, W00}.
	Our research offers a adaptable framework for characterizing continuous-time quantum walk on anisotropic cellular automata, see \cite{S08}.

 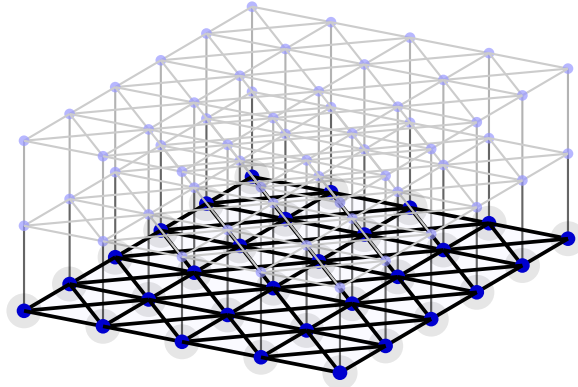
\begin{figure}[h!]
    \centering
    \tdplotsetmaincoords{70}{120}
    
    \begin{tikzpicture}[scale=1.2, tdplot_main_coords]
        
        \def\Xmax{5}
        \def\Ymax{4}
        \def\Zmax{2}
        
        \fill[blue!10, opacity=0.2] 
            (0,0,0) -- (\Xmax,0,0) -- (\Xmax,\Ymax,0) -- (0,\Ymax,0) -- cycle;
        
        \foreach \x in {0,...,\Xmax} {
            \foreach \y in {0,...,\Ymax} {
                \foreach \z in {0,...,\the\numexpr\Zmax-1\relax} {
                    \pgfmathsetmacro{\opacity}{0.6 - 0.3*\z}
                    \draw[black, thick, opacity=\opacity] (\x,\y,\z) -- (\x,\y,\z+1);
                }
            }
        }
        
        \foreach \z in {0,...,\Zmax} {
            \foreach \x in {0,...,\Xmax} {
                \foreach \y in {0,...,\Ymax} {
                    \ifnum\z=0
                        \node[circle, fill=blue!90!black, inner sep=0pt, minimum size=5.5pt] 
                            at (\x,\y,\z) {};
                    \else
                        \node[circle, fill=blue!40, inner sep=0pt, minimum size=4pt, opacity=0.7] 
                            at (\x,\y,\z) {};
                    \fi
                    
                    \ifnum\x<\Xmax
                        \ifnum\z=0
                            \draw[black, line width=1.4pt] (\x,\y,\z) -- (\x+1, \y, \z);
                        \else
                            \draw[gray!40, line width=0.8pt] (\x,\y,\z) -- (\x+1, \y, \z);
                        \fi
                    \fi
                    \ifnum\y<\Ymax
                        \ifnum\z=0
                            \draw[black, line width=1.4pt] (\x,\y,\z) -- (\x, \y+1, \z);
                        \else
                            \draw[gray!40, line width=0.8pt] (\x,\y,\z) -- (\x, \y+1, \z);
                        \fi
                    \fi
                    \ifnum\x<\Xmax \ifnum\y<\Ymax
                        \ifnum\z=0
                            \draw[black, line width=1.4pt] (\x,\y,\z) -- (\x+1, \y+1, \z);
                        \else
                            \draw[gray!40, line width=0.8pt] (\x,\y,\z) -- (\x+1, \y+1, \z);
                        \fi
                    \fi\fi
                    \ifnum\x<\Xmax \ifnum\y>0
                        \ifnum\z=0
                            \draw[black, line width=1.4pt] (\x,\y,\z) -- (\x+1, \y-1, \z);
                        \else
                            \draw[gray!40, line width=0.8pt] (\x,\y,\z) -- (\x+1, \y-1, \z);
                        \fi
                    \fi\fi
                }
            }
        }
        
        \foreach \x in {0,...,\Xmax} {
            \foreach \y in {0,...,\Ymax} {
                \fill[black, opacity=0.1] (\x,\y,0) circle (5.5pt);
            }
        }
        
    \end{tikzpicture}
    
    \caption{Layered Kings Grid on $\R^3$}
    \label{fig:professional-3d-graph}
\end{figure}
In our setting, we use the standard Fourier transform:   
\begin{equation*}
	\widehat{f}\left(x\right):=\frac{1}{\left(2\pi\right)^{d/2}}\sum_{\mathbf{n}\in\Z^{d}}f\left(\mathbf{n}\right)e^{-ix\cdot \mathbf{n}}\;\mathrm{for}\;x\in\left[0,2\pi\right]^{d},
\end{equation*}
where the sequence $f:\Z^{d}\to \C$ belongs to $l^{2}\left(\Z^{d}\right)$. The inverse Fourier transform is given by 
\begin{equation*}
	\check{f}\left(\mathbf{n}\right):=\frac{1}{\left(2\pi\right)^{d/2}}\int_{\left[0,2\pi\right]^{d}}f\left(x\right)e^{ix\cdot \mathbf{n}}\,\d x\;\mathrm{for}\;\mathbf{n}\in\Z^{d},
\end{equation*}
where $f:\left[0,2\pi\right]^{d}\to\C$ belongs to $L^{2}\left(\left[0,2\pi\right]^{d}\right)$. 
	
Applying the Fourier transform on LKG for the equation \eqref{eq:DS}, we have that
    \begin{equation*}
         \begin{aligned} u\left(x,y,z,t\right)&=:e^{it\Delta_{\mathrm{disc}}}u_{0}\left(x,y,z\right)=u_{0}\ast\left(\psi_{t}\right)^{\vee}\left(x,y,z\right)\\
         &=\sum_{\mathbf{n}\in\Z^{3}}u_{0}\left(\mathbf{n}\right)\psi_{t}\left(\left(x,y,z\right)-\mathbf{n}\right)\;\;\mathrm{\;for\;any\;}\left(x,y,z\right)\in\Z^{3}.
		\end{aligned}
	\end{equation*}
Here $\psi_{t}$ is defined by
\begin{equation*}
    \begin{aligned}
	\psi_{t}\left(x,y,z\right)=&\frac{1}{\left(2\pi\right)^{3/2}}\exp\Big(-it\big(10-2\cos x-2\cos y-2\cos z-2\cos\left(x+y\right)-2\cos\left(x-y\right)\big)\Big)\\ 
    =&\frac{1}{\left(2\pi\right)^{3/2}}\exp\left(-it \left(\omega\left(x,y\right)+\omega_{1}\left(z\right)\right)\right)\;\;\mathrm{for \;any}\;\left(x,y,z\right)\in\Z^{3},
    \end{aligned}
\end{equation*}
where  $\omega_{1}\left(z\right)=2-2\cos z$, and 
\begin{equation}\label{eq:omega} \omega\left(x,y\right)=8-2\cos x-2\cos y-2\cos\left(x+y\right)-2\cos\left(x-y\right).
\end{equation}
The Hausdorff-Young inequality gives that
\begin{equation}\label{eq:HY}
    \left\| u\left(t\right)\right\|_{l^{\infty}\left(\mathbb{Z}^{3}\right)}\leq\left\| \left(\psi_{t}\right)^{\vee}\right\|_{l^{\infty}\left(\mathbb{Z}^{3}\right)}\cdot\left\| u_{0}\right\|_{l^{1}\left(\mathbb{Z}^{3}\right)},
	\end{equation}
and we hope to give a decay estimate of $\left\| \left(\psi_{t}\right)^{\vee}\right\|_{l^{\infty}\left(\mathbb{Z}^{3}\right)}$ about time variable $t$ in the sense of 
\begin{equation*}
    \left\| \left(\psi_{t}\right)^{\vee}\right\|_{l^{\infty}\left(\mathbb{Z}^{3}\right)}\lesssim\JP{t}^{-\sigma},
\end{equation*}
for some $\sigma>0$. Changing variables $\left(\xi^{\prime},\eta^{\prime},\zeta^{\prime}\right)=\left(\xi,\eta,\zeta\right)t$ and using the Fubini theorem, the oscillatory integral takes the form
	\begin{equation*}
		\begin{aligned}
			\left(\psi_{t}\right)^{\vee}\left(t\xi,t\eta,t\zeta\right)=\frac{1}{\left(2\pi\right)^{3}}\int_{\left[0,2\pi\right]^{2}}e^{-it\left(\omega\left(x,y\right)-\left(\xi x+\eta y\right)\right)}\d x\d y\int_{\left[0,2\pi\right]}e^{-it\left(\omega_{1}\left(z\right)-\zeta z\right)}\d z,
		\end{aligned}
	\end{equation*} 
where $\left(\xi^{\prime},\eta^{\prime},\zeta^{\prime}\right)\in\R^{3}$ is the velocity. 
As for the integral with respect to $z$, 
\begin{equation*}
	\left|\frac{\d^{3}}{\d z^{3}}\left(\omega_{1}\left(z\right)-z\zeta\right)\right|^{2}+\left|\frac{\d^{2}}{\d z^{2}}\left(\omega_{1}\left(z\right)-z\zeta\right)\right|^{2}=4
\end{equation*}
holds and by the van der Corput's lemma (see \cite[Chapter VIII]{SM93}) we derive
\begin{equation}\label{ineq:lattice case}
	\left|\int_{\left[0,2\pi\right]}e^{-it\left(\omega_{1}\left(z\right)-z\zeta\right)}\d z\right|\lesssim \JP{t}^{-1/3},
\end{equation}
and this estimate is sharp {\cite{SK05}}. We now reduce the problem to the uniform estimate of 
\begin{equation}\label{eq:main oscillatory integral}
	\begin{aligned}
		I\left(\xi,\eta;t\right)=&\int_{\left[0,2\pi\right]^{2}}\exp\left(-it\left(\omega\left(x,y\right)-\left(x\xi+y\eta\right)\right)\right)\d x\d y\\
		:=&\int_{\left[0,2\pi\right]^{2}}\exp\left(-it\Omega\left(x,y;\xi,\eta\right)\right)\d x\d y,
	\end{aligned}
\end{equation}
 and prove its sharp decay rate is $\JP{t}^{-3/4}$. Leveraging the detailed singularity analysis of the phase function $\Omega\left(x,y;\xi,\eta\right)$ presented in Section \ref{sec2}, we derive the desired estimate.

\begin{theorem}\label{thm:main}
    Let $u$ be a solution to \eqref{eq:DS}, we have the following estimate \begin{align}
        \|u(x,t)\|_{l^\infty(V)}\le C\left\langle t\right\rangle^{-13/12}\|u_0\|_{l^1(V)},
	\end{align}
 where $C$ is a constant
that is independent of $t$.
\end{theorem}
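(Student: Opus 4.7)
The plan is to reduce Theorem \ref{thm:main} to a uniform decay estimate for the two-dimensional oscillatory integral $I(\xi,\eta;t)$ introduced in \eqref{eq:main oscillatory integral}. Because the phase of $\psi_t$ splits as $\omega(x,y)+\omega_1(z)$, the inverse Fourier transform factorizes into the product of $I(\xi,\eta;t)$ and the one-dimensional integral in $z$. Combined with the Hausdorff--Young inequality \eqref{eq:HY} and the sharp one-dimensional bound \eqref{ineq:lattice case}, this reduces the whole theorem to proving
\begin{equation*}
    \sup_{(\xi,\eta)\in\mathbb{R}^2}|I(\xi,\eta;t)|\lesssim \langle t\rangle^{-3/4},
\end{equation*}
since $\langle t\rangle^{-3/4}\cdot\langle t\rangle^{-1/3}=\langle t\rangle^{-13/12}$.

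To obtain this two-dimensional bound I would first cover $[0,2\pi]^2$ by a finite smooth partition of unity: neighborhoods of each critical point of the phase $\Omega(\,\cdot\,;\xi,\eta)$ together with a complementary region on which $|\nabla_{x,y}\Omega|$ remains bounded below. On the non-stationary region, iterated integration by parts yields decay of arbitrary polynomial order with constants controlled by negative powers of $|\nabla_{x,y}\Omega|$, and the partition can be arranged so that these constants are uniform in the parameters $(\xi,\eta)$.

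Near a critical point $(x_0,y_0)=(x_0(\xi,\eta),y_0(\xi,\eta))$ the local decay rate is governed by the singularity type of the reduced phase $\omega(x,y)-\omega(x_0,y_0)-\nabla\omega(x_0,y_0)\cdot(x-x_0,y-y_0)$. At non-degenerate critical points, classical two-dimensional stationary phase produces $\langle t\rangle^{-1}$, which is stronger than what is needed. The real work lies at the finitely many points where the Hessian of $\omega$ degenerates: there I would apply the Newton polyhedron analysis set up in Section \ref{sec2}, in the spirit of Varchenko's theorem, choosing adapted coordinates so that the local contribution decays like $\langle t\rangle^{-1/d}$ with $d$ the Newton distance of the phase. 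The prediction, matching the overall rate, is that the worst degeneracy corresponds to Newton distance $4/3$, giving precisely $\langle t\rangle^{-3/4}$.

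The main obstacle is to carry out this singularity analysis \emph{uniformly} in the parameter $(\xi,\eta)$. Generically $(\xi,\eta)$ selects a non-degenerate critical point, but on certain exceptional parameter strata critical points coalesce or migrate into Hessian-degenerate configurations, and the constants in the straightforward stationary-phase bounds blow up near these strata. I would handle this by treating the bifurcations as parameter-dependent normal forms and applying a uniform van der Corput/Newton polyhedron estimate across the whole parameter family, rather than case-by-case at isolated degenerate points. Once the worst Newton distance is confirmed to be $4/3$ and the estimate is made uniform in $(\xi,\eta)$, the bound $|I(\xi,\eta;t)|\lesssim\langle t\rangle^{-3/4}$ follows, and composing with \eqref{ineq:lattice case} and \eqref{eq:HY} completes the proof of Theorem \ref{thm:main}.
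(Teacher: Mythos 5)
Your reduction is correct and matches the paper's: separate variables, bound the $z$-integral by $\langle t\rangle^{-1/3}$ via van der Corput, reduce to proving $\sup_{(\xi,\eta)}|I(\xi,\eta;t)|\lesssim\langle t\rangle^{-3/4}$, and observe $3/4+1/3=13/12$. You also correctly anticipate that the worst Newton height is $4/3$. However, there is a genuine gap where you hand-wave precisely at the hardest step.

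The gap is the uniformity in $(\xi,\eta)$. You propose to cover $[0,2\pi]^2$ by neighborhoods of the critical points of $\Omega(\,\cdot\,;\xi,\eta)$ plus a non-stationary region — but the critical point set depends on $(\xi,\eta)$, and as $(\xi,\eta)$ ranges over $B_7(0)$ every point of the torus becomes a critical point for some velocity, so no single such partition works. You then acknowledge the obstacle ("carry out this singularity analysis uniformly") but the proposed remedy — "treating the bifurcations as parameter-dependent normal forms and applying a uniform van der Corput/Newton polyhedron estimate across the whole parameter family" — is a restatement of the problem, not a solution: Varchenko's stationary-phase asymptotics are not a priori stable under perturbation of the phase, and making them so is exactly the nontrivial content here. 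The paper handles this with a specific cited result, Theorem 1.1 of Ikromov--M\"uller (quoted as Theorem \ref{theorem: linear perturbation}), which says that the $(1+|\xi|)^{-1/h}$ bound for $\int e^{i(\xi_3\varphi+\xi_1 x_1+\xi_2 x_2)}\eta\,\d x$ holds \emph{uniformly over all linear perturbations} $\xi_1 x_1+\xi_2 x_2$, with the neighborhood and the constant depending only on $\varphi$. This is what buys the uniformity. Accordingly the paper localizes not around $(\xi,\eta)$-dependent critical points but around \emph{fixed} base points $(x_0,y_0)\in[0,2\pi]^2$, writes the full phase as $\bigl(\omega-\nabla_{(x_0,y_0)}\omega\cdot(x,y)\bigr)+\bigl(\nabla_{(x_0,y_0)}\omega-(\xi,\eta)\bigr)\cdot(x,y)$, and absorbs the entire $(\xi,\eta)$-dependence into the linear term that Ikromov--M\"uller already controls; compactness then yields a finite cover with uniform constants (Corollary \ref{coro: decomposition of torus} and Theorem \ref{theorem: POU+decay}). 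Without invoking a linear-perturbation stability result of this kind, or proving one, your argument does not close.

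A secondary point: the local decay rate is $\langle t\rangle^{-1/h}$ where $h$ is the \emph{height} (the Newton distance in adapted coordinates), and establishing that the worst height is $4/3$ is itself the content of Propositions \ref{prop: velocity space} and \ref{prop: heights}, which require the adapted-coordinate change in Case~IIb and the discriminant computation verified in Appendix~\ref{appendix}. You are entitled to quote that result, but it is worth noting that "the Newton distance of the phase" in unadapted coordinates need not equal the height, so the adapted-coordinate step cannot be skipped.
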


    The dispersive decay rate of LKG is faster than that for that for $3$-dimensional lattice (\( \langle t \rangle^{-1} \), see \cite{SK05}). We explain this phenomenon using the same reasoning as in \cite{GHJZ25}, where the authors suggested this enhancement is attributed to two potential factors: the greater connectivity afforded by the additional generators, and a geometry whose geodesic balls provide a closer approximation to Euclidean balls. We remark that the estimate is sharp in the sense of combining with \eqref{ineq:lattice case} and \eqref{sharp}.

For the inhomogeneous discrete Schr\"odinger equation, 
\begin{equation}\label{equation: inhomogegeneous Schrodinger equation}
		\begin{cases}
			&\left(\partial_{t}-i\Delta_{disc}\right)u\left(x,t\right)=F\left(x,t\right)\;\;\mathrm{for}\;\left(x,t\right)\in\Z^{3}\times\R_{>0},\\
			&u\left(x,0\right)=u_{0}\left(x\right)\;\;\mathrm{for}\;x\in\Z^{3},
		\end{cases}
	\end{equation}
we have the following Strichartz estimate via standard Keel-Tao argument.
\begin{theorem}\label{thm:Strichartz}
Let $u\in C^1([0,+\infty);l^2(\mathbb{Z}^3))$ be a solution of the equation \eqref{equation: inhomogegeneous Schrodinger equation}, the Strichartz estimate holds with any two Strichartz $13/12$-admissible pairs $\left(q,r\right)$ and $\left(\tilde{q},\tilde{r}\right)$. That is 
\begin{equation}
	\norm{u}_{L^{q}_{t}l^{r}_{n}\left(\left(0,T\right)\times\Z^{3}\right)}\leq C\left(\norm{u_{0}}_{l^{2}\left(\Z^{3}\right)}+\norm{F}_{L^{\tilde{q}^{\prime}}_{t}l^{\tilde{r}^{\prime}}_{n}\left(\left(0,T\right)\times\Z^{3}\right)}\right),
\end{equation}
where $T\in (0, \infty]$.
\end{theorem}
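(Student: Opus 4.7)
The plan is to obtain Theorem \ref{thm:Strichartz} as a direct consequence of the abstract Keel--Tao machinery applied to the free propagator $U(t) := e^{it\Delta_{\mathrm{disc}}}$, with Theorem \ref{thm:main} supplying the crucial dispersive input. The two hypotheses required by the abstract framework are the energy bound and the dispersive bound for the one-parameter family $\{U(t)\}_{t \in \mathbb{R}}$, from which the homogeneous Strichartz estimates follow via the $TT^*$ argument and Hardy--Littlewood--Sobolev, and the inhomogeneous estimates follow via Duhamel's formula combined with the Christ--Kiselev lemma.

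First I would verify the two required inputs. The energy estimate $\|U(t)f\|_{l^2(\mathbb{Z}^3)} = \|f\|_{l^2(\mathbb{Z}^3)}$ is immediate from Plancherel's identity applied to the Fourier-side representation of $U(t)$ as multiplication by the unimodular symbol $\exp(-it(\omega(x,y)+\omega_1(z)))$. The dispersive estimate reads
\begin{equation*}
\bigl\| U(t) U(s)^* g \bigr\|_{l^\infty(\mathbb{Z}^3)} = \bigl\| U(t-s) g \bigr\|_{l^\infty(\mathbb{Z}^3)} \le C \langle t-s \rangle^{-13/12} \| g \|_{l^1(\mathbb{Z}^3)},
\end{equation*}
which is exactly Theorem \ref{thm:main} (using the group property $U(t)U(s)^* = U(t-s)$ that follows from the Fourier representation). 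Note that $\sigma = 13/12 > 1$ lies strictly above the forbidden Keel--Tao threshold $\sigma = 1$, so the endpoint pair $(q,r) = (2, 26)$ is admissible.

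Next I would invoke the abstract theorem of Keel and Tao to deduce that for every $13/12$-admissible pair $(q,r)$, i.e.\ $q, r \ge 2$ with $\frac{1}{q} + \frac{13/12}{r} = \frac{13}{24}$, one has the homogeneous Strichartz estimate $\|U(t)u_0\|_{L^q_t l^r_n} \lesssim \|u_0\|_{l^2(\mathbb{Z}^3)}$. Writing the solution of \eqref{equation: inhomogegeneous Schrodinger equation} via Duhamel,
\begin{equation*}
u(t) = U(t) u_0 - i \int_0^t U(t-s) F(s)\, ds,
\end{equation*}
the first term is bounded directly by the homogeneous estimate, and the second term by combining the homogeneous bound with its dual on the bilinear form $(F,G) \mapsto \iint_{s<t} \langle U(t-s)F(s), G(t)\rangle \, ds\, dt$. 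The off-diagonal case $(q,r) \ne (\tilde q, \tilde r)$ is handled by the Christ--Kiselev lemma, which transfers the untruncated bound to the retarded integral whenever $q > \tilde q'$, while the diagonal case is covered by the endpoint Keel--Tao argument using dyadic decomposition in $|t-s|$.

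I do not expect any genuine obstacles: the entire argument is a verbatim transcription of the Euclidean Keel--Tao scheme, using only $\sigma$-finiteness of the underlying measure space (satisfied by the counting measure on $\mathbb{Z}^3$) and the two operator bounds established above. The substantive analytic content is entirely absorbed into the proof of Theorem \ref{thm:main}; the only work remaining here is to record the admissibility arithmetic, to confirm that the decay rate $\langle t-s\rangle^{-13/12}$ (which is better than $|t-s|^{-13/12}$) suffices to feed into the abstract theorem, and to apply Christ--Kiselev to handle the retarded time integration in the off-diagonal regime.
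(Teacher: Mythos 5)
Your proposal follows essentially the same route as the paper: verify the energy estimate via unitarity of $e^{it\Delta_{\mathrm{disc}}}$, verify the dispersive estimate via Theorem \ref{thm:main} together with $U(t)U(s)^*=U(t-s)$, then apply Duhamel and feed everything into the abstract Keel--Tao theorem with $X=\mathbb{Z}^3$ (counting measure) and $H=l^2(\mathbb{Z}^3)$. The only substantive divergence is your additional invocation of the Christ--Kiselev lemma plus a dyadic decomposition to handle the retarded integral $\int_0^t U(t-s)F(s)\,\mathrm{d}s$: this is not needed, because the version of the Keel--Tao theorem that the paper cites (Theorem \ref{thm:KT}, taken from \cite{KT98}, Theorem 1.2) already contains the retarded estimate
\begin{equation*}
\norm{\int_{0}^{t}U(t)U(s)^{*}F(s,\cdot)\,\mathrm{d}s}_{L^{q}_{t}L^{r}_{x}} \lesssim \norm{F}_{L^{\tilde q'}_{t}L^{\tilde r'}_{x}}
\end{equation*}
for arbitrary pairs of $\sigma$-admissible exponents $(q,r)$ and $(\tilde q,\tilde r)$, so one can simply quote it rather than re-derive it. Your argument is therefore correct but somewhat more elaborate than the paper's; it amounts to a proof sketch of the Keel--Tao retarded bound rather than a citation of it. Two trivia-level remarks: the paper's admissibility convention uses the inequality $1/q+\sigma/r\le\sigma/2$ rather than the equality you wrote, and the decay rate the paper records just before the proof, $\langle t-s\rangle^{-1}$, is a typo for $\langle t-s\rangle^{-13/12}$ (with $l^1$ and $l^\infty$ swapped), which you correctly read through.
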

The definition of \textit{mixed space-time Lebesgue norm} and \textit{$\sigma$-admissible}, we will recall in Subsection \ref{subsec3.1}. Further, in Subsection \ref{subsec3.2}, we will establish the global well-posedness for small initial data to discrete nonlinear Schr\"odinger equation on LKG.

By leveraging the Cartesian product structure of the LKG, we perform a separation of variables, which reduces the problem to deriving the required estimate solely for the King graph factor. A natural direction for future research is to identify other Cayley graphs of $\mathbb{Z}^d$ that lack a Cartesian product decomposition yet exhibit a decay rate faster than that of the d-dimensional lattice. We note that the main difficulty lies in higher dimensions; general results analogous to Theorem \ref{theorem: linear perturbation} remain an open problem in analysis and have thus far only been obtained for multidimensional oscillatory integrals with specific phase functions.

\section{Determining singularities of the phase function}\label{sec2}





In this section we will deal with a more general type of oscillatory integral
\begin{equation}\label{def: General oscillatory integral on the torus}
    I\left(\xi,\eta;t;\chi\right)=\int_{\left[0,2\pi\right]^{2}}e^{-it\Omega\left(x,y;\xi,\eta\right)}\chi\left(x,y\right)\d x\d y
\end{equation}
and give an asymptotic estimate about it when $t$ goes to infinity. Here, $\left(\xi,\eta\right)$ is a fixed vector in $\R^{2}$ which stands for the velocity and the function $\chi$ is a given smooth test function defined on $\left[0,2\pi\right]^{2}$ with periodic boundary conditions, i.e., $\chi$ has a natural periodic extension to the whole plane. In equation \eqref{eq:main oscillatory integral}, we take $\chi=\mathbbm{1}_{\left[0,2\pi\right]^{2}}$. 

We choose a smooth bump function $\mathbf{a}:\R^{2}\to\left[0,1\right]$ such that
\begin{itemize}
    \item $\min_{\left(x,y\right)\in\left[0,2\pi\right]^{2}}\mathbf{a}\left(x,y\right)\geq c_{0}$ for some positive number $c_{0}$,
    \item The support of $\mathbf{a}$ is contained in $\left[-4\pi,4\pi\right]^{2}$.
\end{itemize}
Then by the bounded overlapping property we can naturally assume that 
\begin{equation*}
    \sum_{\left(n,m\right)\in\Z^{2}}\mathbf{a}\left(x+2\pi n,y+2\pi m\right)\equiv 1.
\end{equation*}
Hence the oscillatory integral \eqref{def: General oscillatory integral on the torus} becomes
\begin{equation*}
    \begin{aligned}
        I\left(\xi,\eta;t;\chi\right)=&\sum_{\left(n,m\right)\in\Z^{2}}\int_{\left[0,2\pi\right]^{2}}e^{-it\Omega\left(x,y;\xi,\eta\right)}\mathbf{a}\left(x+2\pi n,y+2\pi m\right)\chi\left(x,y\right)\d x\d y\\
        =&\sum_{\left(n,m\right)\in\Z^{2}}\int_{\left[2\pi n,2\pi\left(n+1\right)\right]\times\left[2\pi m,2\pi\left(m+1\right)\right]}e^{-it\Omega\left(x,y;\xi,\eta\right)}\mathbf{a}\left(x,y\right)\chi\left(x,y\right)\d x\d y\\
        =&\int_{\R^{2}}e^{-it\Omega\left(x,y;\xi,\eta\right)}\mathbf{a}\left(x,y\right)\chi\left(x,y\right)\d x\d y,
    \end{aligned}
\end{equation*}
and we denote $\widetilde{I}\left(\xi,\eta;t;\chi\right)$ be
\begin{equation}\label{def: general oscillatory integral on the plane}
    \int_{\R^{2}}e^{-it\Omega\left(x,y;\xi,\eta\right)}\mathbf{a}\left(x,y\right)\chi\left(x,y\right)\d x\d y,
\end{equation}
where $\mathbf{a}(x,y)\chi(x,y)$ is a smooth compactly supported function defined on $\R^{2}$. In the definition of $\widetilde{I}\left(\xi,\eta;t;\chi\right)$, the function $\chi$ can be chosen as a periodic function, or just a smooth bump function.

When the velocity $\left(\xi,\eta\right)\in \R^{2}\setminus B_{7}\left(0\right)$, the phase function $\Omega\left(\cdot,\cdot;\xi,\eta\right)$ has no critical points in $\left[0,2\pi\right]^{2}$, since the function $\omega_{1}$ satisfies $\left|\nabla\omega_{1}\right|\leq 6$, and $\widetilde{I}\left(\xi,\eta;t;\chi\right)$ decays faster than any polynomial in $t$. If $\left(\xi,\eta\right)\in B_{7}\left(0\right)$, the phase function might has critical points in the support of $\chi$, which slow down the decay rate. In the classical theory of oscillatory integrals, we mainly focus on  critical points, here we call a point $x$ is a \textit{degenerate} point if the Hessian matrix of phase function at $x$ is degenerate, else we say $x$ is a \textit{non-degenerate} point. 	
Recall the phase function $\Omega\left(x,y;\xi,\eta\right)$ is given by
\begin{equation*}
	\omega\left(x,y\right)-\left(x\xi+y\eta\right).
\end{equation*}
 For fixed $\left(x_{0},y_{0}\right)\in\left[0,2\pi\right]^{2}$, there is an unique choice of the velocity $\left(\xi_{0},\eta_{0}\right)\in\R^{2}$ such that $\left(x_{0},y_{0}\right)$ is a critical point of $\Omega\left(x,y;\xi_{0},\eta_{0}\right)$, indeed, one can take $\left(\xi_{0},\eta_{0}\right)=\nabla_{\left(x_{0},y_{0}\right)}\omega$. The order of degeneracy of the phase function at that point is determined by the high order derivatives of $\omega$, and at that point of view, we give a partition of $\left[0,2\pi\right]^{2}$ :
\begin{equation*}
	\left[0,2\pi\right]^{2}=\Sigma_{1}\cup\Sigma_{2}\cup\Sigma_{3},
\end{equation*}
and $\Sigma_{i}$ are defined as 
\begin{equation}\label{def: Sigmai}
	\begin{aligned}
		\Sigma_{1}&=\left\{\left(x_0,y_0\right)\in\left[0,2\pi\right]^{2}:\det\left(\mathrm{Hess}_{\left(x_0,y_0\right)}\omega\right)\neq0\right\},\\
		\Sigma_{2}&=\left\{\left(x_0,y_0\right)\in\left[0,2\pi\right]^{2}:\left(x_0,y_0\right)\;\mathrm{is\;an}\;A_{2}\;\mathrm{type\;singularity\;of}\;\omega\left(x,y\right)-\left(x,y\right)\cdot\nabla_{\left(x_0,y_0\right)}\omega\right\},\\
		\Sigma_{3}&=\left\{\left(x_0,y_0\right)\in\left[0,2\pi\right]^{2}:\left(x_0,y_0\right)\;\mathrm{is\;an}\;A_{3}\;\mathrm{type\;singularity\;of}\;\omega\left(x,y\right)-\left(x,y\right)\cdot\nabla_{\left(x_0,y_0\right)}\omega\right\}.\\
	\end{aligned}
\end{equation}
Here, we remark that in our setting, the types of singularity are exactly $A_k$, $k\leq 3$ (See Propositon \ref{prop: velocity space}). The definition of $A_k$ type singularities, one can refer to \cite{A1, A2, A3}.

Using this partition, we divide the velocity space as follows
\begin{equation*}
    \R^{2}=V_{0}\cup V_{1}\cup V_{2}\cup V_{3},
\end{equation*}
where $V_{i}$ are given by
\begin{equation}\label{def: Vi}
	\begin{aligned}
        V_{0}&=\left\{v\in\R^{2}:\mathrm{for\;all}\;\left(x_{0},y_{0}\right)\in\left[0,2\pi\right]^{2},\;v\neq\nabla_{\left(x_{0},y_{0}\right)}\omega\right\},\\
		V_{3}&=\left\{v\in\R^{2}:\mathrm{there\;exists\;some}\;\left(x_{0},y_{0}\right)\in \Sigma_{3},\;\mathrm{such\;that}\;v=\nabla_{\left(x_{0},y_{0}\right)}\omega\right\},\\
        V_{2}&=\left\{v\in\R^{2}\setminus V_{3}:\mathrm{there\;exists\;some}\;\left(x_{0},y_{0}\right)\in \Sigma_{2},\;\mathrm{such\;that}\;v=\nabla_{\left(x_{0},y_{0}\right)}\omega\right\},\\
        V_{1}&=\left\{v\in\R^{2}\setminus \left(V_{2}\cup V_{3}\right):\mathrm{there\;exists\;some}\;\left(x_{0},y_{0}\right)\in \Sigma_{1},\;\mathrm{such\;that}\;v=\nabla_{\left(x_{0},y_{0}\right)}\omega\right\}.\\
	\end{aligned}
\end{equation}
$\{V_{i}\}_{i=0}^{3}$ actually divides the decay rates into several regions, see our Theorem \ref{theorem: POU+decay}.

In the following, we introduce some basics of Newton polyhedra, see, for example, \cite{A3, V76, PS97,IM16, IKM10}. Let $\varphi$ be an analytic function defined on a sufficiently small neighborhood $U$ of the origin, such that $\varphi\left(0,0\right)=0$ and $\nabla\varphi\left(0,0\right)=0$, its associated Taylor series around the origin is given by
\begin{equation*}
    \varphi\left(x,y\right)=\sum_{n_{1},n_{2}=0}^{\infty}a_{n_{1},n_{2}}x^{n_{1}}y^{n_{2}}.
\end{equation*}
The set
\begin{equation*}
    \mathcal{T}\left(\varphi\right):=\left\{\left(n_{1},n_{2}\right)\in\mathbb{N}^{2}:a_{n_{1},n_{2}}\neq 0\right\}
\end{equation*}
is called the \textit{Taylor support} of $\varphi$ at $\left(0,0\right)$.  The function $\varphi$ is of \textit{finite type} under the assumptions $\varphi\left(0,0\right)=0$ and $\nabla\varphi\left(0,0\right)=0$ just means that $\mathcal{T}\left(\varphi\right)\neq \emptyset$. The \textit{Newton polyhedron} $\mathcal{N}\left(\varphi\right)$ of $\varphi$ at the origin is defined be the convex hull of the union of all the quadrants $\left(n_{1},n_{2}\right)+\R_{+}^{2}$ with $\left(n_{1},n_{2}\right)\in\mathcal{T}\left(\varphi\right)$, where $\R_{+}^{2}:=\left[0,\infty\right)^{2}$. The \textit{face} in a Newton polyhedron is an edge or a vertex. For a Newton polyhedron $\mathcal{N}\left(\varphi\right)$, the \textit{Newton distance} $d_{\varphi}$ is defined by
\begin{equation*}
    d_{\varphi}:=\inf\left\{d>0:\left(d,d\right)\in\mathcal{N}\left(\varphi\right)\right\}.
\end{equation*}
The \textit{principal face} $\pi\left(\varphi\right)$ of the Newton polyhedron of $\varphi$ is the face of minimal dimension containing the point $\left(d_{\varphi},d_{\varphi}\right)$. By a \textit{local coordinate system(at the origin)} we shall mean a smooth coordinate system defined near the origin which preserves 0. The \textit{height} of the smooth function $\varphi$ is defined by
\begin{equation*}
    h_{\varphi}:=\sup\left\{d_{y}\right\}
\end{equation*}
where the supremum is taken over all local coordinate systems $y=\left(y_{1},y_{2}\right)$ at the origin and where $d_{y}$ is the distance between the Newton polyhedron and the origin in the coordinate $y$. A given coordinate system $x$ is said to be \textit{adapted} to $\varphi$ if $h\left(\varphi\right)=d_{x}$. The \textit{Varchenko's exponent} \cite{V76} $\nu\left(\varphi\right)\in\{0,1\}$ as follows:

If there exists an adapted local coordinate system $y$ near the origin such that the principal face $\pi\left(\varphi^{a}\right)$ of $\varphi$, when expressed by the function $\varphi^{a}$ in the new coordinates, is a vertex, and if $h\left(\varphi\right)\geq 2$, then we put $\nu\left(\varphi\right):=1$; otherwise, we put $\nu\left(\varphi\right):=0$.

The following two propositions associated with Newton polyhedra will be used in proofs of our Propositions \ref{prop: velocity space} and \ref{prop: heights}.

\begin{proposition}[\cite{V76}, part 2 of Proposition 0.7]\label{proposition: A_{3}-type singularity}
		Assume that for a given series $f=\sum c_{n}y^{n}$, the point $\left(d\left(f\right),d\left(f\right)\right)$ lies on a closed compact face $\Gamma$ of the Newton's polyhedron. Let $a_{1}n_{1}+a_{2}n_{2}=m$ be the equation of the straight line on which $\Gamma$ lies, where $a_{1}$, $a_{2}$, and $m$ are integers and $a_{1}$ and $a_{2}$ are relatively prime. The the coordinate system $y$ is adapted if both numbers $a_{1}$ and $a_{2}$ are larger than 1.
\end{proposition}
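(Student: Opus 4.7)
The plan is to prove this coordinate-adaptedness criterion by showing that no analytic change of coordinates at the origin can strictly increase the Newton distance once the principal face of $f$ lies on a line $a_1 n_1 + a_2 n_2 = m$ with coprime $a_1, a_2 > 1$. By definition $h(f) = \sup_{\tilde y} d_{\tilde y}$, where the supremum ranges over all local analytic coordinate systems $\tilde y$ preserving the origin, so it suffices to verify that every analytic diffeomorphism $\Phi$ of $(\R^2,0)$ produces a Newton distance no larger than $d_y$.

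First I would reduce the field of competitors to a manageable class. By the standard Puiseux/preparation argument in the analysis of Newton polyhedra (used by Varchenko and refined in Ikromov--M\"uller), any analytic diffeomorphism with identity linear part that could push a lattice point past $(d_y, d_y)$ is, after harmless monomial rescaling, equivalent to a horizontal change $\tilde y_1 = y_1$, $\tilde y_2 = y_2 - \psi(y_1)$ or to the symmetric vertical one, with $\psi$ analytic and vanishing at the origin. Pure dilations $y_i \mapsto \lambda_i y_i$ do not alter the polyhedron, and a general diffeomorphism factors, up to terms irrelevant for the polyhedron, into such admissible moves.

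Second, I would exploit the arithmetic structure of the principal face $\Gamma \subset \{a_1 n_1 + a_2 n_2 = m\}$. Since $\gcd(a_1, a_2) = 1$, the lattice points of $\Gamma$ are exactly those of the form $(n_1^{\min} + k a_2,\, n_2^{\max} - k a_1)$ for $k = 0, 1, \ldots, N$. This yields a factorization
\[
f_\Gamma(y_1, y_2) = y_1^{n_1^{\min}} \, y_2^{n_2^{\min}} \, Q\bigl(y_1^{a_2},\, y_2^{a_1}\bigr)
\]
for some polynomial $Q$. In particular $f_\Gamma$ is quasi-homogeneous of degree $m$ with weights $(a_1, a_2)$, so its non-trivial zero curves take the form $y_2 = c\, y_1^{a_2/a_1}$ or $y_1 = c\, y_2^{a_1/a_2}$, where $c$ is a root of the dehomogenized one-variable polynomial associated with $Q$.

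The most delicate step, which I expect to be the main obstacle, is to show that a horizontal change $\tilde y_2 = y_2 - \psi(y_1)$ can enlarge the Newton polyhedron past $(d_y, d_y)$ only if the graph $y_2 = \psi(y_1)$ matches to sufficient order one of the zero curves of $f_\Gamma$; this requires tracking carefully how the Taylor support shifts under the substitution and controlling the cancellation that $\psi$ can produce on monomials lying on or above $\Gamma$. Granting this reduction, the hypothesis $a_1 > 1$ forces $a_2/a_1$ to be non-integer (since the fraction is in lowest terms and $a_1 \geq 2$), so no analytic $\psi$ can reproduce $c\, y_1^{a_2/a_1}$ to leading order; by symmetry, $a_2 > 1$ blocks the vertical move. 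Combining these observations rules out every admissible change, forces $h(f) = d_y$, and hence shows that the given coordinate system $y$ is adapted to $f$.
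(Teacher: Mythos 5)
This proposition is quoted verbatim from Varchenko's 1976 paper (it is Part~2 of Proposition~0.7 in \cite{V76}); the present paper cites it as a black box and does not re-prove it, so there is no in-paper argument to compare against. Your proposal therefore stands alone, and on its own terms it has genuine gaps.

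Your outline correctly identifies the mechanism behind Varchenko's criterion: the principal part $f_\Gamma$ is quasi-homogeneous with weights $(a_1,a_2)$, its zero branches are quasi-homogeneous curves $y_2 = c\, y_1^{a_2/a_1}$ (or the reflected form), and the coprimality together with $a_1, a_2 > 1$ forces these exponents to be non-integral, so no analytic shear $\psi$ can straighten a branch. The arithmetic observation at the end is correct: $a_1 \mid a_2$ and $\gcd(a_1,a_2)=1$ force $a_1 = 1$, so $a_1>1$ implies $a_2/a_1\notin\mathbb{Z}$.

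However, the argument as written is incomplete in two places, and both are at the heart of the matter. First, the reduction of an arbitrary analytic diffeomorphism of $(\R^2,0)$ to a composition of a "horizontal" shear $\tilde y_2 = y_2 - \psi(y_1)$ and a "vertical" one is asserted via a vague appeal to "the standard Puiseux/preparation argument" but never carried out; this includes the dismissal of nontrivial linear parts, which requires its own lemma (compare $f=(y_1+y_2)^2$, where a linear change moves the Newton diagram substantially — it doesn't contradict the proposition since there $a_1=a_2=1$, but it shows linear terms cannot be waved away without argument). Second, and more importantly, the key step — that a shear can push a lattice point strictly past $(d_y,d_y)$ only when $\psi$ agrees to sufficiently high order with a zero branch of $f_\Gamma$ — is explicitly labeled the "main obstacle" and then \emph{granted} rather than proved. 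That claim is precisely the substantive content of Varchenko's proposition; once it is conceded, what remains is the elementary arithmetic. So the proposal is a correct roadmap rather than a proof: to close it, you would need to actually track the transformation of the Taylor support under the shear, show which monomials on and above $\Gamma$ can cancel, and deduce that nontrivial cancellation forces $\psi$ to match a branch — this is where the real work of the argument lives.
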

\begin{proposition}[\cite{V76}, Proposition 0.8]\label{prop:V0.8}
    Assume that for a given series $f=\sum c_{n}y^{n}$, the point $\left(d\left(f\right),d\left(f\right)\right)$ lies on a closed compact face $\Gamma$ of the Newton's polyhedron. Let $a_{1}n_{1}+n_{2}=m$ be the equation of the straight line on which $\Gamma$ lies, where $a_{1}$ and $m$ are integers. Let 
    \begin{equation*}
        f_{\Gamma}\left(y\right)=\sum_{n\in\Gamma}c_{n}y^{n}\;\;\;\mathrm{and}\;\;\;P\left(y_{1}\right)=f_{\gamma}\left(y_{1},1\right).
    \end{equation*}
    If the polynomial $P$ does not have a real root of multiplicity larger than $m\left(1+a_{2}\right)^{-1}$, then $y$ is a coordinate system adapted to $f$.
\end{proposition}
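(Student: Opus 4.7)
The plan is to verify Varchenko's adaptedness criterion by showing that no smooth local coordinate change at the origin can increase the Newton distance above $d_y := m/(1+a_1)$. Assign to $\Gamma$ the weight vector $(a_1, 1)$, so that every monomial on $\Gamma$ has weighted degree $m$ and all remaining Taylor monomials of $f$ have weighted degree strictly larger than $m$. A general near-identity coordinate change factors, up to invertible monomial rescalings (which preserve the Newton polyhedron) and axis relabeling, through an additive shift of one coordinate. Comparing weighted degrees, only the weighted-homogeneous leading term of the shift can cancel or move monomials on $\Gamma$; contributions of higher weighted degree merely add monomials strictly above $\Gamma$. This reduces the problem to analyzing weighted shearings
\begin{equation*}
    y_1 \;=\; \tilde y_1 + c_0\,\tilde y_2^{a_1} + (\text{higher weighted degree}), \qquad y_2 \;=\; \tilde y_2.
\end{equation*}

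Exploiting the quasi-homogeneity $f_\Gamma(y_1, y_2) = y_2^m\, P(y_1/y_2^{a_1})$, such a shearing produces
\begin{equation*}
    f_\Gamma\!\bigl(\tilde y_1 + c_0 \tilde y_2^{a_1},\,\tilde y_2\bigr) \;=\; \tilde y_2^{\,m}\, P\!\left(\frac{\tilde y_1}{\tilde y_2^{a_1}} + c_0\right).
\end{equation*}
If $c_0$ is a real root of $P$ of multiplicity $\mu$, then $P(u + c_0) = u^\mu Q(u)$ with $Q(0)\neq 0$, so the expression collapses to $\tilde y_1^{\,\mu}\,\tilde y_2^{\,m - a_1 \mu}\,Q(\tilde y_1/\tilde y_2^{a_1})$. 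All of its monomials still lie on the line $a_1 n_1 + n_2 = m$, but the face has been \emph{truncated} on the left: its leftmost lattice point has moved from $(0, m)$ to $(\mu,\, m - a_1 \mu)$. The higher weighted-degree tail of the substitution together with the terms of $f - f_\Gamma$ only contribute monomials strictly above $\Gamma$.

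The diagonal point $(d_y, d_y)$ already lies on the line $a_1 n_1 + n_2 = m$, and belongs to the truncated edge precisely when $d_y \geq \mu$, that is, when $\mu \leq m/(1+a_1)$. Under the hypothesis that every real root of $P$ has multiplicity at most $m/(1+a_1)$, this bound holds for every admissible shearing, so $d_{\tilde y} \leq d_y$; complex roots of $P$ cannot generate real cancellations on $\Gamma$ and therefore cannot shift the face either. Taking the supremum of $d_{\tilde y}$ over all coordinate systems $\tilde y$ yields $h_f = d_y$, which is exactly the adaptedness of $y$.

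The main obstacle is making the first paragraph fully rigorous, namely, ruling out coordinate changes with non-weighted-homogeneous leading part, simultaneous shifts in both variables, or fractional Puiseux-type tails that might conceivably shift $\Gamma$ outward more efficiently than the weighted shearings above. The cleanest resolution passes through a weighted blow-up of the origin with weights $(a_1, 1)$: every admissible coordinate change descends to a single-variable automorphism of the exceptional divisor, on which the algebraic identity exploited in the second paragraph applies directly and forces the root-multiplicity condition to be the only obstruction.
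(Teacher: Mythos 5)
The paper does not prove this proposition at all; it is quoted verbatim from Varchenko \cite{V76}, so there is no internal proof to compare against, and your attempt must be judged on its own merits. The part you do carry out is correct: writing $f_{\Gamma}\left(y_{1},y_{2}\right)=y_{2}^{m}P\left(y_{1}/y_{2}^{a_{1}}\right)$, a shear $y_{1}=\tilde{y}_{1}+c_{0}\tilde{y}_{2}^{a_{1}}$ with $c_{0}$ a real root of $P$ of multiplicity $\mu$ truncates the edge so that its leftmost point becomes $\left(\mu,m-a_{1}\mu\right)$, while all other contributions land strictly above the line $a_{1}n_{1}+n_{2}=m$; since the hypothesis gives $\mu\leq m\left(1+a_{1}\right)^{-1}=d_{y}$ (the ``$a_{2}$'' in the quoted statement is the normalization $a_{2}=1$), the point $\left(d_{y},d_{y}\right)$ remains on the new boundary and the Newton distance does not increase under such a shear.

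The genuine gap is the reduction in your first paragraph: the claim that an arbitrary smooth local coordinate change factors, up to monomial rescalings and axis relabeling, through a single additive shift of one coordinate whose only relevant part is its $\left(a_{1},1\right)$-weighted homogeneous leading term. As stated this is false: admissible changes may shift both variables simultaneously, may have leading parts that are not weighted-homogeneous for the weight $\left(a_{1},1\right)$, and for $a_{1}=1$ include genuinely mixing linear maps; none of these are compositions of rescalings with a one-variable shear, and ruling out that they raise the distance is precisely the content of Varchenko's proposition. The standard treatments (Varchenko's original argument, or Ikromov--M\"uller's via principal root jets and Puiseux expansions of the roots of $\partial_{2}f$ or of the edge polynomial) exist exactly to establish that only shears $y_{1}\mapsto y_{1}-\psi\left(y_{2}\right)$ attached to real roots of $P$ can increase the Newton distance; a weighted-degree count alone does not give this. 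Your final paragraph concedes the issue and defers it to a weighted blow-up, but that step is asserted, not proved: you would need to show that every admissible coordinate change descends to the exceptional divisor and acts there only by translating the root variable, which is where all the work lies. Until that reduction is supplied, the argument is an instructive heuristic for why the root-multiplicity threshold $m\left(1+a_{1}\right)^{-1}$ appears, but not a proof.
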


The techniques from Newton polyhedra will be used to analyze singularities. We say a critical point is an $A_k$ type singularity, if it has normal form $x^{k+1}$. For more basics about singularities and normal forms, we refer to \cite{A1, A2, A3}.
In our setting, we derive Propositions \ref{prop: velocity space} and \ref{prop: heights}, which respectively identify the singularities and heights. Proposition \ref{prop: heights} yields $h<2$, and hence $\nu\equiv 0$. Their proofs are deferred to Section \ref{sec3}.

\begin{proposition}\label{prop: velocity space}
    For any $\left(x_{0},y_{0}\right)\in\left[0,2\pi\right]^{2}$, $\left(x_0,y_0\right)$ is an $A_{k}$, $k\leq 3$ type singularity of the phase function $\omega\left(x,y\right)-\left(x,y\right)\cdot\nabla_{\left(x_0,y_0\right)}\omega.$

    
\end{proposition}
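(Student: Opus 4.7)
The plan is to analyze the singularity at every $(x_0,y_0)\in [0,2\pi]^2$ by explicitly computing the low-order Taylor data of $\omega$ and then invoking the splitting lemma together with the Newton polyhedron criteria stated above. The first step is to record
\[
\partial_x\omega = 2\sin x\,(1+2\cos y), \qquad \partial_y\omega = 2\sin y\,(1+2\cos x),
\]
and the Hessian entries
\[
H_{xx} = 2\cos x\,(1+2\cos y), \quad H_{yy} = 2\cos y\,(1+2\cos x), \quad H_{xy} = -4\sin x\sin y.
\]
Because $(x,y)\cdot\nabla_{(x_0,y_0)}\omega$ is subtracted off, the point $(x_0,y_0)$ is automatically a critical point of the shifted phase, so its singularity type is controlled entirely by $H(x_0,y_0)$ together with the subsequent jets of $\omega$ at that point.

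Next I would check that the Hessian has rank at least one everywhere. If $H_{xy}=0$, then $\sin x_0\sin y_0=0$, and a short case check (say $\sin x_0=0$, so $\cos x_0=\pm 1$) shows that $H_{xx}$ and $H_{yy}$ cannot vanish simultaneously, since the resulting conditions on $\cos y_0$ reduce to the incompatible pair $\cos y_0=-1/2$ and $\cos y_0=0$. Hence whenever $\det H\neq 0$ the critical point is of type $A_1$ (this is precisely the set $\Sigma_1$), and whenever $\det H=0$ the Hessian has rank exactly one. In the latter case the splitting lemma produces an analytic coordinate change $(x,y)\mapsto(u,v)$ centered at $(x_0,y_0)$ such that the shifted phase becomes $\pm v^2+g(u)$, where $g$ vanishes to order at least three; the $A_k$ type is then the order of vanishing of $g$ at the origin minus one, which automatically rules out non-$A_k$ singularities (such as $D$- or $E$-type) since the Hessian does not degenerate fully.

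The remaining task is to bound this order by four. I would translate $(x_0,y_0)$ to the origin, expand the shifted phase as a power series, and apply the Newton polyhedron criteria (Propositions \ref{proposition: A_{3}-type singularity} and \ref{prop:V0.8}) to the resulting germ. The degenerate locus $\{\det H=0\}$ reads
\[
\cos x_0\cos y_0\,(1+2\cos x_0)(1+2\cos y_0) = 4\sin^2 x_0\sin^2 y_0,
\]
and cuts out an explicit algebraic curve in $(\cos x_0,\cos y_0)$ that can be parametrized; on this curve one can identify the unique null direction of $H$ and compute the third- and fourth-order terms of the shifted phase along it. The main obstacle will be verifying that the fourth-order coefficient does not vanish at the points of this curve where the third-order coefficient already vanishes. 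In practice this reduces to showing that a certain explicit trigonometric polynomial is nowhere zero on the degenerate locus, a finite check which, once completed, yields the classification $k\le 3$ uniformly on $[0,2\pi]^2$.
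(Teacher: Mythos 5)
Your approach is sound and parallels the paper's proof in structure. One genuine difference: you explicitly invoke the splitting lemma to reduce the germ to $\pm v^2 + g(u)$ with $g$ a one-variable germ, which is a clean conceptual way to see that only $A_k$-type singularities can arise (no $D$- or $E$-types), given your correct check that the Hessian never vanishes entirely. The paper does not name the splitting lemma; it makes the linear change of variables $u = x - \tfrac{2s_1s_2}{c_1(1+2c_2)}y$, $v = y$ (which is exactly the linear part of the splitting-lemma coordinate change) and then works directly with weighted-homogeneous expansions, Newton polyhedra, and the Varchenko adapted-coordinate criteria. The two routes carry identical computational content: after completing the square, the order of $g$ equals $3$ iff the $v^3$-coefficient $\beta\neq 0$, and equals $4$ iff $\beta=0$ and the discriminant $\alpha^2 - 4(2c_1c_2+c_1)\gamma\neq0$; these are precisely the quantities the paper names in \eqref{coe:v3}, \eqref{coe:uv2}, \eqref{coe:discriminant} and the sets $\Gamma_2,\Gamma_4$.

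The genuine gap is that you do not carry out the key verification. You correctly identify the crux — ``showing that a certain explicit trigonometric polynomial is nowhere zero on the degenerate locus, a finite check which, once completed, yields the classification $k\le 3$'' — but you leave it as a promissory note. This is not a minor bookkeeping step: on the curve $\det H = 0$ there genuinely are points where $\beta = 0$ (the paper's check that $\Gamma_1\cap\Gamma_2\neq\emptyset$), so the $A_3$ stratum is nonempty and one must verify that on $\Gamma_1\cap\Gamma_2$ the discriminant does not also vanish. If it vanished somewhere, an $A_4$ or worse singularity would occur and the proposition would be false, so the claim cannot be accepted without the computation. The paper resolves this by running Mathematica's \texttt{Reduce} (Cylindrical Algebraic Decomposition) on the polynomial systems $\Gamma_1\cap\Gamma_2\cap\Gamma_3$ and $\Gamma_1\cap\Gamma_2\cap\Gamma_4$ in Appendix \ref{appendix}, obtaining emptiness. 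To complete your argument along your own lines you would need either to reproduce that CAD computation or to produce a hand argument ruling out common zeros of the two trigonometric polynomials on the degenerate curve.
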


\begin{proposition}\label{prop: heights}
    The height $h$ of the function $\omega\left(x,y\right)-\left(x,y\right)\cdot\nabla_{\left(x_0,y_0\right)}\omega$ at point $\left(x_0,y_0\right)$, is constant in each $\Sigma_{i}$ for $i=1,2,3$ and takes values as follows:
    \begin{itemize}
        \item For $\left(x_0,y_0\right)\in\Sigma_{1}$, $h=1$,
        \item For $\left(x_0,y_0\right)\in\Sigma_{2}$, $h=6/5$,
        \item For $\left(x_0,y_0\right)\in\Sigma_{3}$, $h=4/3$.
    \end{itemize}
\end{proposition}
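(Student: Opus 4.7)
The strategy is to reduce each stratum to a concrete Newton-polyhedral computation after passing to suitably adapted local coordinates, using the singularity type identified by Proposition \ref{prop: velocity space}. I first translate the critical point to the origin by setting
\begin{equation*}
\phi(u,v) := \omega(x_0+u,y_0+v) - \omega(x_0,y_0) - (u,v)\cdot\nabla_{(x_0,y_0)}\omega,
\end{equation*}
so that $\phi(0,0)=0$, $\nabla\phi(0,0)=0$, and the height of $\omega(x,y) - (x,y)\cdot\nabla_{(x_0,y_0)}\omega$ at $(x_0,y_0)$ equals $h_{\phi}$ at the origin.

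On $\Sigma_1$ the Hessian is nondegenerate, so the Morse lemma provides smooth coordinates in which $\phi=\pm \tilde u^{2}\pm\tilde v^{2}$; the principal face lies on $n_1+n_2=2$, meets the diagonal at $(1,1)$, and gives $d_{\phi}=h_{\phi}=1$. On $\Sigma_{2}\cup\Sigma_{3}$ the Hessian has rank exactly one, since Proposition \ref{prop: velocity space} rules out all singularities outside the $A_k$ family with $k\le 3$, so the splitting lemma combined with the classical one-variable $A_k$ normal form produces smooth coordinates in which $\phi=\pm\tilde u^{k+1}\pm\tilde v^{2}$, with $k=2$ on $\Sigma_{2}$ and $k=3$ on $\Sigma_{3}$. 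In these coordinates the Newton polyhedron has vertices $(k+1,0)$ and $(0,2)$, principal face on the line $2n_{1}+(k+1)n_{2}=2(k+1)$, and Newton distance $d=\tfrac{2(k+1)}{k+3}$, which equals $6/5$ for $k=2$ and $4/3$ for $k=3$.

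To upgrade Newton distance to height I verify adaptedness using the two Varchenko criteria stated in the excerpt. For $k=2$ the line equation $2n_{1}+3n_{2}=6$ has both coefficients strictly greater than $1$, so Proposition \ref{proposition: A_{3}-type singularity} applies directly. For $k=3$, after swapping the two coordinates to match the normalized form $a_1n_1+n_2=m$, the line becomes $2n_1+n_2=4$ and the edge polynomial $P(\tilde u)=\pm\tilde u^{2}\pm1$ has no multiple real roots, so Proposition \ref{prop:V0.8} certifies adaptedness. Since the Newton data in the normal form depend only on $k$, the height is constant on each stratum.

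The principal subtlety is making sure the smooth normalization does not alter the Taylor support on the relevant principal face: I would carry out the reduction as an explicit analytic procedure --- first applying the implicit function theorem along the nonzero Hessian direction to obtain $\phi=\pm\tilde v^{2}+g(\tilde u)$ with $g$ of order $\ge3$, then a Tschirnhaus substitution in $\tilde u$ to bring $g$ to $\pm\tilde u^{k+1}$ --- and then checking that the leading coefficient of $g$ is nonzero using precisely the defining conditions of $\Sigma_{2}$ and $\Sigma_{3}$ in \eqref{def: Sigmai}. Once adaptedness is secured, the Newton distances above are the heights, and the uniform bound $h<2$ immediately gives $\nu\equiv 0$ as noted in the excerpt.
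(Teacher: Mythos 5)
Your proof is correct, but the route is genuinely different from the paper's. The paper does not pass to the $A_k$ normal form $\pm\tilde u^{k+1}\pm\tilde v^{2}$; instead it proves Propositions \ref{prop: velocity space} and \ref{prop: heights} in a single argument by explicitly Taylor-expanding the shifted phase, running a case analysis on $(c_1,c_2)$, and — in the generic degenerate Case~IIb — making the linear substitution $u=x-\frac{2s_1s_2}{c_1(1+2c_2)}y$, $v=y$ that only kills the cross term. It then reads the Newton polyhedron directly from the resulting coefficients $\alpha,\beta,\gamma$ of $uv^2$, $v^3$, $v^4$, invoking a Mathematica verification that $\alpha$ and $\beta$ never vanish together on $\Gamma_1$ and that the discriminant $\alpha^2-4(2c_1c_2+c_1)\gamma$ is nonzero, and applies Proposition \ref{proposition: A_{3}-type singularity} (when $\beta\neq0$, principal part $\sim u^2+\beta v^3$) or Proposition \ref{prop:V0.8} (when $\beta=0$, principal part $\sim u^2+\alpha uv^2+\gamma v^4$) in those non-normal-form coordinates. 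Your argument instead treats Proposition \ref{prop: velocity space} as already established, uses the splitting lemma plus a Tschirnhaus reduction to reach the exact normal form, and recovers the well-known formula $h=\tfrac{2(k+1)}{k+3}$ for an $A_k$ singularity; your adaptedness checks (both exponents $>1$ when $k=2$, and a simple-root edge polynomial after the coordinate swap when $k=3$) are exactly right. The trade-off: your proof is cleaner and shorter as a derivation of Proposition \ref{prop: heights} from Proposition \ref{prop: velocity space}, but it cannot substitute for the paper's computation, because the concrete Taylor expansion and coefficient bookkeeping are precisely what is needed to establish Proposition \ref{prop: velocity space} itself (that only $A_k$, $k\le 3$, occurs and where the strata $\Sigma_2,\Sigma_3$ sit). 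One small point to make fully rigorous in your write-up is that the splitting-lemma and Tschirnhaus changes are genuine local coordinate changes in the sense used to define the height — i.e. smooth diffeomorphisms fixing the origin — so that the Newton distance computed in normal-form coordinates is an admissible competitor for the supremum defining $h_\varphi$; you flag this, and it does go through.
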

The next Theorem \ref{theorem: linear perturbation} is quoted from \cite{IM11}, shows that the oscillatory integral is stable under the small linear perturbations of the phase function. 
\begin{theorem}[\cite{IM11}, Theorem 1.1]\label{theorem: linear perturbation}
	Let $\varphi$ be a smooth, real-valued phase function of finite type, defined near the origin, as before, and let $h:=h\left(\varphi\right)$, $\nu:=\nu\left(\varphi\right)$. Then there exist a neighborhood $\Omega\subset\R^{2}$ of the origin and a constant $C$ such that for every $\eta\in C^{\infty}_{0}\left(\Omega\right)$ the following estimate holds true for every $\xi\in\R^{3}$:
\begin{equation*}
	\left|\int_{\R^{2}}e^{i\left(\xi_{3}\varphi\left(x_{1},x_{2}\right)+\xi_{1}x_{1}+\xi_{2}x_{2}\right)}\eta\left(x\right)\d x\right|\leq C\norm{\eta}_{C^{3}\left(\R^{2}\right)}\left(\log\left(2+\left|\xi\right|\right)\right)^{\nu}\left(1+\left|\xi\right|\right)^{-1/h}.
\end{equation*}
\end{theorem}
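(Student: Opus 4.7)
The plan is to follow the resolution-of-singularities framework developed by Varchenko and refined in the Ikromov--M\"uller program. The first step is to pass to an adapted local coordinate system $y = (y_1, y_2)$ at the origin, which exists since $\varphi$ is of finite type. In this system the Newton polyhedron $\mathcal{N}(\varphi)$ meets the bisectrix at distance $d_y = h(\varphi) = h$ along the principal face $\pi(\varphi^a)$. A key preliminary observation is that any smooth change of coordinates transforms the linear perturbation $\xi_1 x_1 + \xi_2 x_2$ into a linear part in $y$ (which can be absorbed by a linear redefinition of $(\xi_1,\xi_2)$) plus higher-order terms (which can be absorbed into $\xi_3 \varphi$). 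Hence the shape of the desired estimate is preserved under the reduction to adapted coordinates, and it suffices to work with the adapted form.

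Next, decompose a small neighborhood of the origin using a partition of unity adapted to $\mathcal{N}(\varphi)$. For each compact face $\Gamma$ (vertex or edge), associate the weight vector $(\kappa_1, \kappa_2)$ determined by the support line of $\Gamma$, and localize to anisotropic dyadic shells $\{|y_i| \sim 2^{-j \kappa_i}\}$. On each shell, rescale $y_i = 2^{-j\kappa_i} z_i$. The phase becomes
\[
\xi_3 \, 2^{-jm} \Phi_j(z) + 2^{-j\kappa_1} \xi_1 z_1 + 2^{-j\kappa_2} \xi_2 z_2,
\]
where $m$ is the weighted degree associated with $\Gamma$ and $\Phi_j(z) \to \varphi_\Gamma(z)$ uniformly as $j \to \infty$; the amplitude has support in a fixed annulus in $z$. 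On each such shell one applies van der Corput's lemma in one variable combined with stationary phase in the other, exploiting that $\varphi_\Gamma$ admits no real zero of excessive multiplicity (Proposition \ref{prop:V0.8}), so that $\nabla \Phi_j$ cannot degenerate uniformly on the support. Summing the per-shell bounds over $j$ and over faces produces the target rate $(1+|\xi|)^{-1/h}$; when $\pi(\varphi^a)$ is a vertex with $h \ge 2$, infinitely many dyadic scales resonate at the critical rate, which is exactly what generates the factor $(\log(2+|\xi|))^\nu$.

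The hard part will be maintaining uniformity in the full vector $\xi = (\xi_1, \xi_2, \xi_3)$ rather than just in $\xi_3$. When $(\xi_1, \xi_2)$ is small relative to $\xi_3$ the critical point of the perturbed phase stays near the origin and the resolution described above applies verbatim; when $(\xi_1, \xi_2)$ is large compared with $\sup_{\mathrm{supp}\,\eta} |\nabla \varphi|$ the perturbed phase has no critical points on the support of $\eta$, and repeated integration by parts supplies rapid decay, provided the denominators $|\nabla(\xi_3\varphi + \xi_1 x_1 + \xi_2 x_2)|$ are controlled uniformly across the dyadic shells. The delicate intermediate regime occurs when the ratio $(\xi_1,\xi_2)/\xi_3$ lies in the gradient image of some dyadic shell: the stationary point migrates into that shell, and one must argue that the rescaled oscillatory integral there is still bounded by the unperturbed Newton-polyhedron height rather than a worse exponent. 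Tracking these three regimes while preserving the exponent $1/h$ uniformly in $\xi$, and handling the logarithmic loss only when the Varchenko criterion for $\nu = 1$ is satisfied, is the technical core of the argument.
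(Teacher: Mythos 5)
The paper does not prove this theorem at all: it is quoted verbatim from Ikromov and M\"uller \cite{IM11}, Theorem 1.1, and used as a black box. So there is no ``paper's own proof'' to compare against; what matters is whether your reconstruction of the Ikromov--M\"uller argument is sound.

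Your sketch contains a genuine gap at the very first reduction, and it is precisely the point that makes the theorem hard. You claim that after passing to adapted coordinates $y = \Psi^{-1}(x)$, the linear perturbation $\xi_1 x_1 + \xi_2 x_2 = \xi_1 \Psi_1(y) + \xi_2 \Psi_2(y)$ splits into a linear part plus higher-order terms ``which can be absorbed into $\xi_3 \varphi$.'' This absorption is not possible: the higher-order corrections carry the coefficients $\xi_1$ and $\xi_2$, not $\xi_3$, and $(\xi_1,\xi_2,\xi_3)$ are independent parameters over which the estimate must be uniform. Rewriting $\xi_1 \Psi_1(y) + \xi_2 \Psi_2(y) - (\text{linear part})$ as a multiple of $\varphi$ would fix a ratio between $\xi_1,\xi_2$ and $\xi_3$, destroying uniformity. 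This is exactly why earlier results (Varchenko, Duistermaat) do not directly give the uniform statement, and why Ikromov--M\"uller's theorem required new work: adapted coordinates are generally nonlinear, so the class of ``phase plus linear perturbation'' is not stable under the change of variables that normalizes $\varphi$. Their actual argument keeps the original coordinates when adapted coordinates are nonlinear, distinguishes the so-called linearly adapted case, and performs a much finer decomposition (including the analysis near the principal root jet) in the nonlinearly adapted case. Your three-regime analysis in the final paragraph correctly identifies where the stationary point can live, but because the initial reduction is invalid, the dyadic-shell estimates you propose to sum are estimates for the wrong integral. A correct proof would need to either avoid adapted coordinates entirely in the relevant cases or track how the perturbation transforms with uniform control, which is the technical core of \cite{IM11}.
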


Using Theorem \ref{theorem: linear perturbation}, we show
\begin{corollary}\label{coro: decomposition of torus}
    Let sets $\{\Sigma_{i}\}_{i=1}^{3}$ defined by \eqref{def: Sigmai} and fix $\left(x_{0},y_{0}\right)\in\left[0,2\pi\right]^{2}$. Then there exist a neighborhood of $\left(x_{0},y_{0}\right)$, denoted as $U_{\left(x_{0},y_{0}\right)}$, such that for all smooth test function $\chi$ supported in $U_{\left(x_{0},y_{0}\right)}$,
    \begin{equation}
        \left|\widetilde{I}\left(\xi,\eta;t;\chi\right)\right|\leq C_{\left(x_{0},y_{0}\right)}\norm{\mathbf{a}\chi}_{C^{3}\left(\R^{2}\right)}\left(1+\left|t\right|\right)^{-3/4}\;\;\mathrm{if}\;\left(x_{0},y_{0}\right)\in \Sigma_{3},
    \end{equation}
    \begin{equation}
        \left|\widetilde{I}\left(\xi,\eta;t;\chi\right)\right|\leq C_{\left(x_{0},y_{0}\right)}\norm{\mathbf{a}\chi}_{C^{3}\left(\R^{2}\right)}\left(1+\left|t\right|\right)^{-5/6}\;\;\mathrm{if}\;\left(x_{0},y_{0}\right)\in \Sigma_{2},
    \end{equation}
    \begin{equation}
        \left|\widetilde{I}\left(\xi,\eta;t;\chi\right)\right|\leq C_{\left(x_{0},y_{0}\right)}\norm{\mathbf{a}\chi}_{C^{3}\left(\R^{2}\right)}\left(1+\left|t\right|\right)^{-1}\;\;\mathrm{if}\;\left(x_{0},y_{0}\right)\in \Sigma_{1},
    \end{equation}
    for all $\left(\xi,\eta\right)\in\R^{2}$.
\end{corollary}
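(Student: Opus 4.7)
The plan is to reduce the claim to a direct application of Theorem \ref{theorem: linear perturbation}, once the phase function is recentered at $(x_{0},y_{0})$. The key observation is that $-t\Omega(x,y;\xi,\eta)$ already has the shape $\xi_{3}\varphi+\xi_{1}x_{1}+\xi_{2}x_{2}$ appearing in that theorem, provided we split off from $\omega$ its Taylor polynomial of degree $\leq 1$ at $(x_{0},y_{0})$ and absorb the linear terms into the $(\xi_{1},\xi_{2})$ part.

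Concretely, I would introduce the translated variables $\tilde{x}=x-x_{0}$, $\tilde{y}=y-y_{0}$ and set
\[
\varphi(\tilde{x},\tilde{y}):=\omega(x_{0}+\tilde{x},y_{0}+\tilde{y})-\omega(x_{0},y_{0})-\nabla\omega(x_{0},y_{0})\cdot(\tilde{x},\tilde{y}).
\]
This $\varphi$ vanishes together with its gradient at the origin, and in the translated coordinates its Newton polyhedron coincides with that of $\omega(x,y)-(x,y)\cdot\nabla_{(x_{0},y_{0})}\omega$ at $(x_{0},y_{0})$ (they differ only by an additive constant, which leaves the Taylor support unchanged). Hence $\varphi$ is of finite type, of singularity type at most $A_{3}$ by Proposition \ref{prop: velocity space}, and has the heights listed in Proposition \ref{prop: heights}. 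A direct expansion yields
\[
-t\Omega(x,y;\xi,\eta)=-t\,\varphi(\tilde{x},\tilde{y})+t\bigl(\xi-\omega_{x}(x_{0},y_{0})\bigr)\tilde{x}+t\bigl(\eta-\omega_{y}(x_{0},y_{0})\bigr)\tilde{y}+c(t;\xi,\eta),
\]
where $c(t;\xi,\eta)$ is independent of $(\tilde{x},\tilde{y})$ and contributes only a unimodular factor.

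Setting $\xi_{3}:=-t$, $\xi_{1}:=t(\xi-\omega_{x}(x_{0},y_{0}))$, $\xi_{2}:=t(\eta-\omega_{y}(x_{0},y_{0}))$, I would choose $U_{(x_{0},y_{0})}$ to be the translate by $(x_{0},y_{0})$ of the neighborhood of the origin furnished by Theorem \ref{theorem: linear perturbation} applied to $\varphi$. For any $\chi$ supported in $U_{(x_{0},y_{0})}$, the unit-Jacobian change of variables $(x,y)\mapsto(\tilde{x},\tilde{y})$ turns $\mathbf{a}\chi$ into a compactly supported smooth $\tilde{\eta}$ with $\|\tilde{\eta}\|_{C^{3}(\R^{2})}=\|\mathbf{a}\chi\|_{C^{3}(\R^{2})}$, and Theorem \ref{theorem: linear perturbation} gives
\[
|\widetilde{I}(\xi,\eta;t;\chi)|\leq C\,\|\mathbf{a}\chi\|_{C^{3}(\R^{2})}\bigl(\log(2+|\xi|)\bigr)^{\nu}(1+|\xi|)^{-1/h}.
\]
To finish, I would invoke Proposition \ref{prop: heights} to read off $h\in\{1,6/5,4/3\}<2$ in each of the three cases; by the very definition of Varchenko's exponent, $h<2$ forces $\nu=0$, so the logarithmic loss drops out. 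The bound $|\xi|\geq|\xi_{3}|=|t|$, valid for every velocity $(\xi,\eta)\in\R^{2}$, then produces $(1+|t|)^{-1/h}$, and substituting the three heights recovers the exponents $1,\,5/6,\,3/4$ on $\Sigma_{1},\Sigma_{2},\Sigma_{3}$ respectively.

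I do not anticipate a serious obstacle here: the entire analytic difficulty has been pushed into Propositions \ref{prop: velocity space} and \ref{prop: heights} (which classify the singularity types and compute the heights via the Newton-polyhedron machinery) and into Theorem \ref{theorem: linear perturbation} (which is quoted from \cite{IM11}). The only subtle point in the present argument is to make sure that the translation from $(x_{0},y_{0})$ to the origin genuinely converts the constrained phase $\omega(x,y)-(x\xi+y\eta)$ into the three-parameter model phase of Theorem \ref{theorem: linear perturbation}; this is handled by the identification $\xi_{3}=-t$, which in particular guarantees that the decay parameter $(1+|\xi|)^{-1/h}$ controls the needed $(1+|t|)^{-1/h}$ uniformly in the velocity $(\xi,\eta)$.
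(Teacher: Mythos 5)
Your proof is correct and follows the same route as the paper's: translate $(x_0,y_0)$ to the origin, strip off the affine part of $\omega$ so the remaining phase matches the model of Theorem \ref{theorem: linear perturbation}, identify $(\xi_1,\xi_2,\xi_3)=(t(\xi-\xi_0),t(\eta-\eta_0),-t)$, and read off the exponents from Proposition \ref{prop: heights}, with $\nu=0$ because $h<2$. You have in fact spelled out the translation, the bookkeeping of the unimodular constant, and the lower bound $|\xi|\geq|t|$ more explicitly than the paper, which simply writes $\widetilde{I}$ as an integral with phase $t\Omega(x,y;\xi_0,\eta_0)+t(\xi_0-\xi)x+t(\eta_0-\eta)y$ and invokes the theorem; the content is the same.
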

\begin{proof}[Proof of Corollary \ref{coro: decomposition of torus}]
    Without loss of generality, we just show the proof when $\left(x_{0},y_{0}\right)\in \Sigma_{3}$. Noting that there exists some $\left(\xi_{0},\eta_{0}\right)$ such that $\nabla\omega\left(x_{0},y_{0}\right)=\left(\xi_{0},\eta_{0}\right)$. By Theorem \ref{theorem: linear perturbation} there exist a neighborhood $U_{\left(x_{0},y_{0}\right)}$ of $\left(x_{0},y_{0}\right)$ and a constant $C_{\left(x_{0},y_{0}\right)}$, such that for any $\chi\in C^{\infty}_{c}\left(U_{\left(x_{0},y_{0}\right)}\right)$, and any $\left(t,t\left(\xi_{0}-\xi\right),t\left(\eta_{0}-\eta\right)\right)\in\R^{3}$, 
    \begin{equation*}
        \begin{aligned}
            \left|\widetilde{I}\left(\xi,\eta;t;\chi\right)\right|=&\left|\int_{U_{\left(x_{0},y_{0}\right)}}e^{-i\left(t\Omega\left(x,y;\xi_{0},\eta_{0}\right)+t\left(\xi_{0}-\xi\right)+t\left(\eta_{0}-\eta\right)\right)}\mathbf{a}\left(x,y\right)\chi\left(x,y\right)\d x\d y\right|\\
            \leq& C_{\left(x_{0},y_{0}\right)}\norm{\mathbf{a}\chi}_{C^{3}\left(\R^{2}\right)}\left(1+\left|t\right|\right)^{-3/4},
        \end{aligned}
    \end{equation*}
    as desired.
\end{proof}

To proceeding, we derive the following Proposition \ref{prop:fast decay}, which can be regarded as a easy variant of the stationary phase method, see, for example \cite{SM93, W03}.
\begin{proposition}\label{prop:fast decay}
     If $\mathrm{supp}\;\chi$ does not contain any critical points of the phase function $\Omega\left(x,y;\xi,\eta\right)$, then for any $M>0$,
    \begin{equation*}
        \left|\widetilde{I}\left(\xi,\eta;t;\chi\right)\right|\leq C\left(M,\chi,d\right)\JP{t}^{-M},
    \end{equation*}
    where $d$ is the infimum of $\left|t^{-1}\left(\xi,\eta\right)-\nabla_{\left(x,y\right)}\omega\right|$ over the support of $\chi$ with respect to $\left(x,y\right)$.
\end{proposition}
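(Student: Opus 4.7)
The plan is to invoke the classical non-stationary phase principle by repeated integration by parts. By hypothesis, $\mathrm{supp}\,\chi$ avoids all critical points of $\Omega(\,\cdot\,,\,\cdot\,;\xi,\eta) = \omega(x,y)-x\xi-y\eta$. Since critical points are exactly the zeros of $\nabla_{(x,y)}\Omega = \nabla\omega - (\xi,\eta)$, this means $|\nabla_{(x,y)}\Omega|$ is bounded below by a positive quantity governed by $d$ on the (compact) support of $\mathbf{a}\chi$, and this lower bound is precisely what will drive the decay.

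Concretely, I would introduce the first-order differential operator
$$L := \frac{1}{-it\,|\nabla\Omega|^{2}}\,\nabla\Omega\cdot\nabla,$$
which is smooth on $\mathrm{supp}(\mathbf{a}\chi)$ and satisfies $L(e^{-it\Omega}) = e^{-it\Omega}$. Replacing $e^{-it\Omega}$ by $L^{M}(e^{-it\Omega})$ in the integrand of $\widetilde{I}(\xi,\eta;t;\chi)$ and integrating by parts $M$ times transfers the formal adjoint $(L^{*})^{M}$ onto $\mathbf{a}\chi$. There are no boundary contributions because $\mathbf{a}\chi$ is compactly supported in $\R^{2}$.

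Each application of $L^{*}$ produces a factor of size $t^{-1}$ together with either a first-order derivative of $\mathbf{a}\chi$ or, via the quotient rule on $|\nabla\Omega|^{-2}\nabla\Omega$, a second-order derivative of $\omega$, all weighted by rational expressions in $|\nabla\Omega|^{-1}$. Since $\omega$ is a fixed trigonometric polynomial with uniformly bounded derivatives and $|\nabla\Omega|$ is bounded below on the support, iterating $M$ times yields
$$\bigl|\widetilde{I}(\xi,\eta;t;\chi)\bigr|\;\leq\;C(M,\chi,d)\,|t|^{-M}$$
for $|t|\geq 1$. For $|t|\leq 1$, the integral is trivially dominated by $\|\mathbf{a}\chi\|_{L^{1}(\R^{2})}$, and combining the two regimes upgrades $|t|^{-M}$ to $\JP{t}^{-M}$.

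This is essentially the textbook non-stationary phase lemma of \cite[Ch.~VIII]{SM93}, so I do not anticipate a genuine conceptual obstacle. The only item requiring care is the bookkeeping which confirms that the constant depends only on $M$, finitely many $C^{k}$-seminorms of $\mathbf{a}\chi$, the uniformly bounded derivatives of $\omega$, and the lower bound $d$; this follows by tracking that each application of $L^{*}$ raises the differentiation order by one and the power of $1/|\nabla\Omega|$ by a fixed amount.
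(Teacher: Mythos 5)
Your proof is correct and is exactly the argument the paper is pointing to when it calls Proposition~\ref{prop:fast decay} ``an easy variant of the stationary phase method'' and cites \cite{SM93, W03}; the paper gives no proof of its own. The operator $L$ and the repeated integration by parts you set up is the textbook non-stationary-phase lemma, and you correctly note the two points that need verification in this particular setting: the absence of boundary terms (compact support of $\mathbf{a}\chi$) and the uniformity of the constant in $(\xi,\eta)$, which follows because all second and higher derivatives of $\Omega(\cdot,\cdot;\xi,\eta)$ coincide with those of the fixed trigonometric polynomial $\omega$, while $|\nabla\Omega|$ is bounded below in terms of $d$. The $\JP{t}$ upgrade via the trivial bound for $|t|\leq 1$ is also the right final step.
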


Applying Proposition \ref{prop: heights}, Corollary \ref{coro: decomposition of torus} and Proposition \ref{prop:fast decay}, we deduce the following Theorem \ref{theorem: POU+decay} in the spirit of \cite[Theorem 2.4]{BG17}.
\begin{theorem}\label{theorem: POU+decay}
    Let the sets $\{V_{i}\}_{i=0}^{3}$ be defined by \eqref{def: Vi}, $\chi$ be a smooth periodic function on $\left[0,2\pi\right]^{2}$, and the integral $I\left(\xi,\eta;t;\chi\right)$ defined by \eqref{def: General oscillatory integral on the torus}. Then for any fixed $\delta>0$, there exist constant $C_{0}$, $C_{1}$, $C_{2}$ and $C_{3}$ depending on $\chi$ such that
    \begin{itemize}
        \item For all $\left(\xi,\eta\right)$ with $\mathrm{dist}\left(\left(\xi,\eta\right),tV_{3}\right)\leq t\delta$, we have
            \begin{equation}\label{eq:A_{3} decay}
                \left|I\left(\xi,\eta;t;\chi\right)\right|\leq\frac{C_{3}\left(\chi\right)}{\left|t\right|^{3/4}}.
            \end{equation}
        \item For all $\left(\xi,\eta\right)$ with $\mathrm{dist}\left(x,tV_{3}\right)>t\delta$ and $\mathrm{dist}\left(\left(\xi,\eta\right),tV_{2}\right)\leq t\delta$, we have
            \begin{equation}
                \left|I\left(\xi,\eta;t;\chi\right)\right|\leq\frac{C_{2}\left(\chi,\delta\right)}{\left|t\right|^{5/6}}.
            \end{equation}
        \item For all $\left(\xi,\eta\right)$ with $\mathrm{dist}\left(\left(\xi,\eta\right),t\left(V_{2}\cup V_{3}\right)\right)> t\delta$ and $\mathrm{dist}\left(x,tV_{1}\right)\leq t\delta$, we have
            \begin{equation}
                \left|I\left(\xi,\eta;t;\chi\right)\right|\leq\frac{C_{1}\left(\chi,\delta\right)}{\left|t\right|}.
            \end{equation}
        \item For all $\left(\xi,\eta\right)$ with $\mathrm{dist}\left(\left(\xi,\eta\right),t\left(V_{1}\cup V_{2}\cup V_{3}\right)\right)>t\delta$, we have
            \begin{equation}
                \left|I\left(\xi,\eta;t;\chi\right)\right|\leq\frac{C_{0}\left(\chi,\delta,M\right)}{\left|t\right|^{M}}.
            \end{equation}
    \end{itemize}
\end{theorem}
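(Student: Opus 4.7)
The plan is to combine Corollary \ref{coro: decomposition of torus} with Proposition \ref{prop:fast decay} via a smooth partition of unity on $[0,2\pi]^2$. First I would cover $[0,2\pi]^2$ by the neighborhoods $U_{(x_0,y_0)}$ furnished by Corollary \ref{coro: decomposition of torus}, shrinking each one if necessary so that the image $\nabla\omega(U_{(x_0,y_0)})$ lies in an arbitrarily small neighborhood of $\nabla\omega(x_0,y_0)$; continuity of $\nabla\omega$ makes this possible. Compactness of the torus then yields a finite subcover $\{U_j\}_{j=1}^N$ with centers $(x_0^j,y_0^j)\in\Sigma_{k_j}$ for some $k_j\in\{1,2,3\}$, and I fix a smooth periodic partition of unity $\{\chi_j\}$ subordinate to this cover.

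Next I would split
\begin{equation*}
I(\xi,\eta;t;\chi) \;=\; \sum_{j=1}^N I(\xi,\eta;t;\chi\chi_j)
\end{equation*}
and estimate each summand by a dichotomy. If $(\xi,\eta)\in\nabla\omega(\mathrm{supp}\,\chi_j)$, i.e.\ the phase has a critical point in the support, then Corollary \ref{coro: decomposition of torus} together with Proposition \ref{prop: heights} gives
\begin{equation*}
|I(\xi,\eta;t;\chi\chi_j)| \;\leq\; C\,(1+|t|)^{-1/h_{k_j}},\qquad h_1=1,\;h_2=6/5,\;h_3=4/3,
\end{equation*}
so the three possible decay exponents are $1$, $5/6$, $3/4$. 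Otherwise the phase $\Omega(\cdot,\cdot;\xi,\eta)$ has no critical point on $\mathrm{supp}\,\chi_j$, and Proposition \ref{prop:fast decay} produces a rapid bound $C_M(1+|t|)^{-M}$ with constant controlled by $M$ and by $d_j:=\inf_{\mathrm{supp}\,\chi_j}|(\xi,\eta)-\nabla\omega|$.

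The four regimes of the theorem are then handled by identifying which classes $\Sigma_{k_j}$ can still contain a critical point for the given $(\xi,\eta)$. By the preliminary shrinking step, for each $j$ with $(x_0^j,y_0^j)\in\Sigma_k$ one has $\nabla\omega(\mathrm{supp}\,\chi_j)\subset V_k+B_{\delta'}(0)$ for a $\delta'$ proportional to $\delta$, so the separation hypothesis from $tV_k$ forces $d_j$ to be bounded below by a positive multiple of $\delta$, and Proposition \ref{prop:fast decay} annihilates the $j$-th piece uniformly in $(\xi,\eta)$. In the regime closest to $tV_3$ the worst surviving bound is $|t|^{-3/4}$; the next regime kills every $\Sigma_3$-piece and leaves $|t|^{-5/6}$; the next additionally kills every $\Sigma_2$-piece and leaves $|t|^{-1}$; and in the last regime every piece decays like $|t|^{-M}$. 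Summing the $N$ pieces then produces the constants $C_3(\chi)$, $C_2(\chi,\delta)$, $C_1(\chi,\delta)$, $C_0(\chi,\delta,M)$ claimed in the theorem.

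The main obstacle I foresee is the uniform control of the constants in Proposition \ref{prop:fast decay} as $(\xi,\eta)$ varies, since that proposition's constant depends on $d_j$. To end up with constants that depend only on $\chi$, $\delta$, and $M$, I must verify that the stated separation hypothesis indeed forces each surviving $d_j$ to be bounded below uniformly; this is precisely what the preliminary shrinking step, combined with the compactness of $\Sigma_{k_j}$, is designed to accomplish. Once that uniformity is in place, the case analysis and the finite summation are routine.
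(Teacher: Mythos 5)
Your proposal is correct and follows essentially the same route as the paper's own proof: cover $[0,2\pi]^2$ by the neighborhoods from Corollary \ref{coro: decomposition of torus}, refine the cover using the uniform continuity of $\nabla\omega$ so that each piece has $\nabla\omega$-image of diameter less than a fixed fraction of $\delta$, take a subordinate partition of unity, classify pieces by whether their center lies in $\Sigma_1$, $\Sigma_2$, or $\Sigma_3$, kill the pieces whose velocity image is separated from $t^{-1}(\xi,\eta)$ via Proposition \ref{prop:fast decay}, and bound the surviving pieces by Corollary \ref{coro: decomposition of torus}. The paper organizes the case analysis slightly differently (it first records the uniform $|t|^{-3/4}$ bound and then successively improves it as the separation hypotheses strengthen), but the partition, the refinement by diameter control, the dichotomy between the stationary-phase bound and the non-stationary rapid decay, and the final summation are all the same.
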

\begin{proof}[Proof of Theorem \ref{theorem: POU+decay}]
    Let $\delta>0$ be a fixed number. When $\left(\xi,\eta\right)\in\R^{2}$ satisfies that $\mathrm{dist}\left(\left(\xi,\eta\right),t\left(V_{1}\cup V_{2}\cup V_{3}\right)\right)>t\delta$, i.e., $\mathrm{dist}\left(t^{-1}\left(\xi,\eta\right),V_{1}\cup V_{2}\cup V_{3}\right)>\delta$, then by taking use of Proposition \ref{prop:fast decay}, we obtain
    \begin{equation*}
        \left|\widetilde{I}\left(\xi,\eta;t;\chi\right)\right|\leq C\left(M,\chi,d\right)\JP{t}^{-M}
    \end{equation*}
    with $d=\delta$. Indeed, we note that $\nabla\omega$ is periodic on $\R^{2}$ and then
    \begin{equation*}
        \begin{aligned}
            &\inf_{\left(x,y\right)\in\mathrm{supp}\left(\mathbf{a}\cdot\chi\right)}\left|t^{-1}\left(\xi,\eta\right)-\nabla_{\left(x,y\right)}\omega\right|=\inf_{\left(x,y\right)\in\mathrm{supp}\left(\chi\right)}\left|t^{-1}\left(\xi,\eta\right)-\nabla_{\left(x,y\right)}\omega\right|>\delta.
        \end{aligned}
    \end{equation*}
    Below we always assume that $\mathrm{dist}\left(\left(\xi,\eta\right),t\left(V_{1}\cup V_{2}\cup V_{3}\right)\right)\leq t\delta$. 
    
    For any $\left(x,y\right)\in\left[0,2\pi\right]^{2}$, by fixing a neighborhood (in particular, we set this neighborhood be a ball) of $\left(x,y\right)$ which satisfies the inequalities in Corollary \ref{coro: decomposition of torus}, we can get a open cover $\{U_{\left(x,y\right)}\}_{\left(x,y\right)\in\left[0,2\pi\right]^{2}}$ of the compact set $\left[0,2\pi\right]^{2}$. Then one may extract a finite sub-cover $\{U_n\}_{1\le n\le N_1}$ and construct a subordinate partition of unity $\{\chi_n\}_{1\le n\le N_1}$ such that $\operatorname{supp}\chi_n\subset U_n$ for each $n$ and every $\chi_n$ satisfies the required periodicity condition. Since
    \begin{equation}
        \left|I\left(\xi,\eta;t;\chi\right)\right|\leq\sum_{1\leq n\leq N_{1}}\left|I\left(\xi,\eta;t;\chi\cdot\chi_{n}\right)\right|=\sum_{1\leq n\leq N_{1}}\left|\widetilde{I}\left(\xi,\eta;t;\chi\cdot\chi_{n}\right)\right|,
    \end{equation}
    and by Corollary \ref{coro: decomposition of torus}, each $\widetilde{I}\left(\xi,\eta;t;\chi\cdot\chi_{n}\right)$ satisfies that (here we use the lowest decay rate $-3/4$)
    \begin{equation*}
        \left|\widetilde{I}\left(\xi,\eta;t;\chi\cdot\chi_{n}\right)\right|\leq C_{n}\norm{\mathbf{a}\chi\cdot\chi_{n}}_{C^{3}\left(\R^{2}\right)}\left(1+\left|t\right|\right)^{-3/4},
    \end{equation*}
    we conclude that
    \begin{equation}\label{eq: -3/4 bound}
        \left|I\left(\xi,\eta;t;\chi\right)\right|\leq C\left(\chi\right)\JP{t}^{-3/4}.
    \end{equation}
    Inequality \eqref{eq: -3/4 bound} holds for any $\left(\xi,\eta\right)\in\R^{2}$, and we will use more detailed assumptions of the open cover to improve this inequality for some particular $\left(\xi,\eta\right)$. 
    
    We define the function $\mathcal{V}:\left[0,2\pi\right]^{2}\to \R^{2}$, where $\mathcal{V}\left(x,y\right)=\nabla_{\left(x,y\right)}\omega$ and hence it's uniformly continuous on $\left[0,2\pi\right]^{2}$. For given $\delta>0$, we choose some small enough $\epsilon>0$ depends on $\delta$ such that
    \begin{equation*}
        \mathrm{diam}\left(\mathcal{V}\left(B_{\epsilon}\right)\right)<\frac{\delta}{2}
    \end{equation*}
    for all ball $B_{\epsilon}$ of radius $\epsilon$. Applying Corollary \ref{coro: decomposition of torus} we get a refine open covering \begin{equation*}
        \left\{U_{\left(x,y\right)}\cap B_{\epsilon}\left(\left(x,y\right)\right)\right\}_{\left(x,y\right)\in\left[0,2\pi\right]^{2}}
    \end{equation*} 
    of $\left[0,2\pi\right]^{2}$, and after choosing a suitable finite sub-cover $\left\{U_{n}\right\}_{1\leq n\leq N_{2}}$, there is a partition of unity $\{\chi_{n}\}_{1\leq n\leq N_{2}}$ such that $\chi_{n}$ supports in $U_{n}$ for each $1\leq n\leq N_{2}$ and each $\chi_{n}$ satisfies the periodic condition. Noting that each subset $U_{n}$, $1\leq n\leq N_{2}$, is identified with a center $\left(x_{n},y_{n}\right)\in\left[0,2\pi\right]^{2}$, we give three index sets to represent those centers in $\Sigma_{i}$,
    \begin{equation*}
        J_{i}:=\left\{1\leq n\leq N_{2}:\left(x_{n},y_{n}\right)\in\Sigma_{i}\right\}\;\mathrm{where}\;i=1,2,3.
    \end{equation*}
    For every $\left(\xi,\eta\right)\in \R^{2}$ with $\mathrm{dist}\left(\left(\xi,\eta\right),tV_{3}\right)>t\delta$, we have
    \begin{equation*}
        \left|t^{-1}\left(\xi,\eta\right)-\nabla_{\left(x,y\right)}\omega\right|>\delta
    \end{equation*}
    for all $\left(x,y\right)\in \Sigma_{3}$ and this implies that $\left|t^{-1}\left(\xi,\eta\right)-\nabla_{\left(x,y\right)}\omega\right|>\delta/2$ for all $\left(x,y\right)\in\cup_{n\in J_{3}}U_{n}$. Proposition \ref{prop:fast decay} gives that for each $n\in J_{3}$, 
    \begin{equation*}
        \left|\widetilde{I}\left(\xi,\eta;t;\chi\cdot\chi_{n}\right)\right|\leq C\left(M,\chi,d\right)\JP{t}^{-M},
    \end{equation*}
    where we can set $d=\delta/2$. Therefore, 
    \begin{equation*}
        \begin{aligned}
            \left|I\left(\xi,\eta;t;\chi\right)\right|\leq&\sum_{1\leq n\leq N_{2}}\left|I\left(\xi,\eta;t;\chi\cdot\chi_{n}\right)\right|=\left(\sum_{n\in J_{3}}+\sum_{n\in J_{1}\cup J_{2}}\right)\left|\widetilde{I}\left(\xi,\eta;t;\chi\cdot\chi_{n}\right)\right|\\
            \leq&C\left(M,\chi,\delta\right)\JP{t}^{-M}+C\left(\chi\right)\JP{t}^{-5/6},
        \end{aligned}
    \end{equation*}
    for each $M\in\mathbb{N}$. In particular, for $M=1$, we have $\left|I\left(\xi,\eta;t;\chi\right)\right|\leq C\left(\chi,\delta\right)\JP{t}^{-5/6}$. 

    Finally, for those $\left(\xi,\eta\right)\in \R^{2}$ with $\mathrm{dist}\left(\left(\xi,\eta\right),t\left(V_{2}\cup V_{3}\right)\right)>t\delta$, we conclude
    \begin{equation*}
        \left|t^{-1}\left(\xi,\eta\right)-\nabla_{\left(x,y\right)}\omega\right|>\delta
    \end{equation*}
    for all $\left(x,y\right)\in \Sigma_{2}\cup \Sigma_{3}$ and this implies that $\left|t^{-1}\left(\xi,\eta\right)-\nabla_{\left(x,y\right)}\omega\right|>\delta/2$ for all $\left(x,y\right)\in\cup_{n\in J_{2}\cup J_{3}}U_{n}$. Using Proposition \ref{prop:fast decay} again, we derive that for each $n\in J_{2}\cup J_{3}$, 
    \begin{equation*}
        \left|\widetilde{I}\left(\xi,\eta;t;\chi\cdot\chi_{n}\right)\right|\leq C\left(M,\chi,d\right)\JP{t}^{-M},
    \end{equation*}
    where we can set $d=\delta/2$. Hence, inequality
    \begin{equation*}
        \begin{aligned}
            \left|I\left(\xi,\eta;t;\chi\right)\right|\leq&\sum_{1\leq n\leq N_{2}}\left|I\left(\xi,\eta;t;\chi\cdot\chi_{n}\right)\right|=\left(\sum_{n\in J_{2}\cup J_{3}}+\sum_{n\in J_{1}}\right)\left|\widetilde{I}\left(\xi,\eta;t;\chi\cdot\chi_{n}\right)\right|\\
            \leq&C\left(M^{\prime},\chi,\delta\right)\JP{t}^{-M^{\prime}}+C\left(\chi\right)\JP{t}^{-1}
        \end{aligned}
    \end{equation*}
    holds for each $M^{\prime}\in\mathbb{N}$. Setting $M^{\prime}=2$, we have $\left|I\left(\xi,\eta;t;\chi\right)\right|\leq C\left(\chi,\delta\right)\JP{t}^{-1}$. We complete the proof.
\end{proof}
As a direct consequence, if one only concern about the worst possible decay among all $x\in\Z^{2}$, the simpler statement is 
\begin{corollary}\label{coro: sharp decay-R^2}
   Taking $\chi=\mathbbm{1}_{\left[0,2\pi\right]^{2}}$ in \eqref{def: General oscillatory integral on the torus}, one has
   \begin{equation*}
       \norm{I\left(\xi,\eta;t;\mathbbm{1}_{\left[0,2\pi\right]^{2}}\right)}_{L^{\infty}_{\left(\xi,\eta\right)}\left(\R^{2}\right)}\leq C\JP{t}^{-3/4}.
   \end{equation*}
   for some constants $C>0$.
\end{corollary}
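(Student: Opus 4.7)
The plan is to derive the uniform $L^\infty$ estimate as an immediate consequence of the detailed case-by-case analysis in Theorem \ref{theorem: POU+decay}. The key observation is that among the four regions of velocity space identified there, the slowest decay rate is $\langle t\rangle^{-3/4}$, coming from the neighborhood of $tV_3$ where $A_3$-type singularities dominate. So the corollary should follow by simply taking the worst-case bound and arguing uniformly over $(\xi,\eta)$.

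First I would address the mild regularity mismatch between the statement and the hypothesis of Theorem \ref{theorem: POU+decay}: the corollary uses $\chi = \mathbbm{1}_{[0,2\pi]^2}$, which is not smooth, while Theorem \ref{theorem: POU+decay} assumes $\chi$ is smooth and periodic. However, since the integration in definition \eqref{def: General oscillatory integral on the torus} is already restricted to $[0,2\pi]^2$, we have
\begin{equation*}
I\bigl(\xi,\eta;t;\mathbbm{1}_{[0,2\pi]^2}\bigr) = I(\xi,\eta;t;\mathbf{1}),
\end{equation*}
where $\mathbf{1}$ denotes the constant function equal to $1$ on $\R^2$, which is trivially smooth and $2\pi$-periodic. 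Thus Theorem \ref{theorem: POU+decay} applies with $\chi = \mathbf{1}$.

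Next, I would fix any convenient $\delta > 0$ (say $\delta = 1$) and observe that every $(\xi,\eta) \in \R^2$ falls into exactly one of the four mutually exclusive cases enumerated in Theorem \ref{theorem: POU+decay}, with bounds of the form $C_i\langle t\rangle^{-\sigma_i}$ where $\sigma_i \in \{3/4,\,5/6,\,1,\,M\}$ and $M$ can be taken arbitrarily large. Since $3/4$ is the smallest among these exponents, each of the four bounds is dominated by $C\langle t\rangle^{-3/4}$ for some constant $C$ depending only on $\chi$ (and on the now-fixed $\delta$). Taking the supremum over $(\xi,\eta) \in \R^2$ yields
\begin{equation*}
\bigl\| I\bigl(\xi,\eta;t;\mathbbm{1}_{[0,2\pi]^2}\bigr) \bigr\|_{L^\infty_{(\xi,\eta)}(\R^2)} \leq C \langle t\rangle^{-3/4}.
\end{equation*}

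I do not expect any real obstacle here, since Theorem \ref{theorem: POU+decay} was designed precisely to produce such pointwise decay estimates in each velocity regime, and the corollary is obtained simply by extracting the worst-case rate. The only slightly non-routine ingredient is the observation that the indicator function may be replaced by the smooth periodic constant function when the integration is already over a single fundamental domain.
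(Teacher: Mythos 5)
Your proof is correct and follows the same route the paper intends: Corollary \ref{coro: sharp decay-R^2} is obtained by taking the worst of the four decay rates in Theorem \ref{theorem: POU+decay} (namely $\langle t\rangle^{-3/4}$ near $tV_3$) after fixing an arbitrary $\delta$ and choosing $M\geq 3/4$, and your preliminary observation that $I(\xi,\eta;t;\mathbbm{1}_{[0,2\pi]^2})=I(\xi,\eta;t;\mathbf{1})$ cleanly reconciles the indicator function in the statement with the smooth periodic $\chi$ required in Theorem \ref{theorem: POU+decay}.
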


Corollary \ref{coro: sharp decay-R^2} indicates that our main estimate is sharp in the following sense: there are vectors $\left(\xi_{0},\eta_{0}\right)\in \R^d$ and non-zero constant $c$ such that 
\begin{equation}\label{sharp}
    \lim_{t\to\infty}t^{3/4}\left|I\left(t\xi_{0},t\eta_{0};t;\mathbbm{1}_{\left[0,2\pi\right]^{2}}\right)\right|=c,
\end{equation} since the following estimate proved by Ikromov and M\"uller holds.
\begin{theorem}\cite[Theorem 1.3]{IM11}\label{theorem: sharpness}
    Let us put 
    \begin{equation*}
        J_{\pm}\left(\lambda\right):=\int_{\R^{2}}e^{\pm i\lambda\varphi\left(x_{1},x_{2}\right)}\eta\left(x\right)\d x,\;\;\lambda>0,
    \end{equation*}
    with $\varphi$ and $\eta$ as in Theorem \ref{theorem: linear perturbation}. If the principle face $\pi\left(\varphi^{a}\right)$ of $\varphi$, when given in adapted coordinates, is a compact set (i.e., a compact edge or a vertex), then there exists a neighborhood $\Omega$ of the origin such that for every $\eta$ supported in $\Omega$ the following limits
    \begin{equation*}
        \lim_{\lambda\to\pm\infty}\frac{\lambda^{1/h}}{\left(\log\lambda\right)^{\nu}}J_{\pm}\left(\lambda\right)=c_{\pm}\eta\left(0\right)
    \end{equation*}
    exist, where the constants $c_{\pm}$ are non-zero and depend on the phase function $\varphi$ only.
\end{theorem}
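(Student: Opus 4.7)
The plan is to extract the leading asymptotic coefficient of $J_{\pm}(\lambda)$ by Varchenko's resolution of singularities, sharpening the upper bound of Theorem \ref{theorem: linear perturbation} into a genuine limit and exhibiting the explicit constant $c_\pm$.

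First I would pass to adapted coordinates $y=(y_1,y_2)$ at the origin, so that $d_y=h$, and localize $J_\pm(\lambda)$ to a small neighborhood of the origin via a smooth cutoff; the complementary region contributes $O(\lambda^{-N})$ for every $N$ by non-stationary phase, since $\nabla\varphi$ is non-vanishing there. Let the line carrying the principal face $\pi(\varphi^a)$ have equation $a_1 n_1+a_2 n_2 = m$ with $\gcd(a_1,a_2)=1$, so that $1/h=(a_1+a_2)/m$, and write $\varphi^a_\pi(y):=\sum_{(n_1,n_2)\in\pi(\varphi^a)} c_{n_1 n_2}\, y_1^{n_1} y_2^{n_2}$ for the principal part.

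Next I would apply a weighted dyadic decomposition $y_j = 2^{-k a_{3-j}/m}\, z_j$ chosen so that $\varphi^a_\pi$ is homogeneous of weight one, i.e.\ $\varphi^a_\pi(2^{-k a_2/m}z_1,2^{-k a_1/m}z_2) = 2^{-k}\,\varphi^a_\pi(z_1,z_2)$. The dyadic piece at scale $k$ then reads
\begin{equation*}
2^{-k/h} \int_{\R^2} e^{\pm i (\lambda 2^{-k})\bigl(\varphi^a_\pi(z) + O(2^{-k\delta})\bigr)}\, \rho_k(z)\, dz
\end{equation*}
for some $\delta>0$, because every non-principal monomial of $\varphi^a$ lies strictly above the line $a_1 n_1+a_2 n_2 = m$. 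The critical scale $k^{\ast}$ satisfying $\lambda 2^{-k^{\ast}} \sim 1$ contributes at order $\lambda^{-1/h}$. Scales $k>k^{\ast}$ gain extra decay in $\mu:=\lambda 2^{-k}$ from a further van der Corput estimate on the principal-part integral, while scales $k<k^{\ast}$ may be treated by a non-stationary-phase argument on the full $\varphi^a$. If $\pi(\varphi^a)$ is a vertex, only the single scale $k^\ast$ survives at leading order, forcing $\nu=0$; if $\pi(\varphi^a)$ is a compact edge with $d_y\ge 2$, a full logarithmic range of scales contributes equally, producing the factor $(\log\lambda)^\nu$ with $\nu=1$.

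The main obstacle I anticipate is verifying that the resulting limit constant
\begin{equation*}
c_\pm = \int_{\R^2} e^{\pm i\, \varphi^a_\pi(z)}\, \rho(z)\, dz,
\end{equation*}
with $\rho$ assembled from the weighted partition of unity, is strictly nonzero. In the vertex case $\varphi^a_\pi$ is a single monomial $c\, z_1^A z_2^B$ and the integral factors into one-variable Mellin/Gamma values that are manifestly nonzero. In the edge case I would invoke adaptedness via Proposition \ref{prop:V0.8}: it forbids the dehomogenized polynomial $\varphi^a_\pi(y_1,1)$ from having a real root of multiplicity exceeding $m/(1+a_2)=h$, so the principal-part integral has only sub-critical singularities and a final one-variable van der Corput argument along the level curves of $\varphi^a_\pi$ extracts a strictly nonzero value, completing the sharpness claim.
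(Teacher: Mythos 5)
This statement is quoted verbatim from Ikromov--M\"uller \cite[Theorem 1.3]{IM11}; the paper offers no proof of its own, so your proposal must stand or fall on its own merits rather than by comparison to anything in the text. Your high-level plan---pass to adapted coordinates, apply a Newton-weight dyadic decomposition, isolate the critical scale $\lambda 2^{-k^\ast}\sim 1$, and then argue non-vanishing of the resulting coefficient---is the right general playbook. But several steps are incorrect or too thin to carry the argument.

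The most concrete error is that you have the two cases for the Varchenko exponent inverted. By the paper's own definition (repeated from \cite{V76,IM11}), $\nu(\varphi)=1$ precisely when the principal face \emph{in adapted coordinates is a vertex} and $h\geq 2$, and $\nu=0$ otherwise (in particular whenever the principal face is a compact edge). Your sketch asserts the opposite: a single surviving scale and $\nu=0$ in the vertex case, and a logarithmic pile-up and $\nu=1$ in the edge case. The model example $\varphi(y)=y_1^2 y_2^2$ (vertex $(2,2)$, $h=2$) does give $\lambda^{-1/2}\log\lambda$, while $\varphi(y)=y_1^2+y_2^3$ (compact edge, $h=6/5$) gives a clean $\lambda^{-5/6}$ with no log. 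Relatedly, the single-parameter dyadic rescaling $y_j=2^{-k\kappa_j}z_j$ you propose cannot by itself produce the $\log\lambda$: in the vertex case the near-critical mass lives on an entire one-parameter family of bidyadic boxes $|y_1|\sim 2^{-j}$, $|y_2|\sim 2^{-l}$ with $a_1 j + a_2 l$ roughly constant, so one must perform a genuinely two-parameter (bidyadic) decomposition, or further decompose each $\kappa$-homogeneous annulus, before the logarithmic count becomes visible. (Also, as written your rescaling $y_j=2^{-ka_{3-j}/m}z_j$ does not make $\varphi^a_\pi$ homogeneous unless $a_1=a_2$; the exponent should be $a_j/m$, not $a_{3-j}/m$, though this looks like a slip.)

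The non-vanishing of $c_\pm$ is the hard part of the theorem and your treatment does not establish it. The formula $c_\pm=\int_{\R^2}e^{\pm i\varphi^a_\pi(z)}\rho(z)\,\d z$ cannot be correct as stated, since it manifestly depends on the auxiliary cutoff $\rho$ built from your partition of unity, while the theorem requires $c_\pm$ to depend on $\varphi$ alone; the actual leading coefficient is a distributional/renormalized quantity (the Gelfand--Leray or Mellin-type residue) independent of the cutoff. In the vertex case $\varphi^a_\pi=c\,z_1^A z_2^B$ the integral you wrote does not separate into a product of one-variable integrals, since $e^{ic z_1^A z_2^B}$ is not a product of functions of $z_1$ and $z_2$; what one actually does is pass to the fiber variable $u=z_1^A z_2^B$ and study the pushed-forward density, with the double pole of the associated Mellin transform on the diagonal producing the log and the Gamma-factor residue giving the non-zero constant. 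In the edge case, invoking van der Corput along level curves only yields \emph{upper} bounds and says nothing about whether the limit is non-zero; one must instead analyze the one-dimensional fiber integrals over the level sets of the $\kappa$-homogeneous principal part, show the phase has there only finitely many critical points of bounded order (this is where adaptedness and Proposition \ref{prop:V0.8} genuinely enter), and compute the leading Fresnel-type coefficient, verifying it is non-zero by a separate argument. As it stands, the proposal records the correct scaffolding but does not close either the log-bookkeeping or the non-vanishing, and so is not a proof.
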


 \section{Proofs of Propositions \ref{prop: velocity space} and \ref{prop: heights}}\label{sec3}

For simplicity, the following notion will be used.
\begin{definition}
	Let weight $\kappa=\left(\kappa_{1},\kappa_{2},\dots,\kappa_{d}\right)$ with $\kappa_{i}>0$ for $i=1,2,\dots,d$. We say a function $f:\R^{d}\to\R$ is $\kappa-$homogeneous of degree $r$ if
    \begin{equation*}
		f\left(\lambda^{\kappa_{1}}x_{1},\lambda^{\kappa_{2}}x_{2},\dots,\lambda^{\kappa_{d}}x_{d}\right)=\lambda^{r}f\left(x_{1},x_{2},\dots,x_{d}\right).
	\end{equation*}
\end{definition}
For an analytic function $f$ defined on $\R^{2}$ near the origin, we can choose a suitable weight $\kappa=\left(\kappa_{1},\kappa_{2}\right)$ with $0<\kappa_{1},\kappa_{2}\leq1$ such that its Taylor series can be written as
\begin{equation*}
    f\left(x_{1},x_{2}\right)=f_{\mathrm{pr}}\left(x_{1},x_{2}\right)+\mathrm{sum\;of\;the\;monimials\;with\;\kappa-homogeneous\;of\;degree>1},
\end{equation*}
where the principle part $f_{\mathrm{pr}}$ is $\kappa-$homogeneous of degree 1. This homogeneous polynomial helps us to analyze the Newton polyhedron of $f$ in $\left(x_{1},x_{2}\right)$ local coordinate system.

Now we begin to proof Propositions \ref{prop: velocity space} and \ref{prop: heights}. 
 
	\begin{proof}[Proof of Propositions \ref{prop: velocity space} and \ref{prop: heights}]
		
		Recall the phase function \begin{equation}
			\begin{aligned}
				\Omega\left(x,y;\xi,\eta\right)=&8-2\cos x-2\cos y-2\cos\left(x+y\right)-2\cos\left(x-y\right)-\left(x\xi+y\eta\right).
			\end{aligned}  
		\end{equation}
For a fixed $\left(x_{0},y_{0}\right)\in\left[0,2\pi\right]^{2}$, there exists  $\left(\xi_{0},\eta_{0}\right)\in\R^{2}$ with $\left(\xi_{0},\eta_{0}\right)=\nabla_{\left(x_{0},y_{0}\right)}\omega$,  such that $\left(x_{0},y_{0}\right)$ is a critical point of $\Omega\left(x,y;\xi_{0},\eta_{0}\right)$. That is, we derive that \begin{equation*}
			\begin{cases}
				2\sin x_{0}+2\sin\left(x_{0}+y_{0}\right)+2\sin\left(x_{0}-y_{0}\right)=\xi_{0},\\
				2\sin y_{0}+2\sin\left(x_{0}+y_{0}\right)-2\sin\left(x_{0}-y_{0}\right)=\eta_{0}.\\
			\end{cases}
		\end{equation*}
		 The type of singularity of $\Omega\left(x,y;\xi_{0},\eta_{0}\right)$ at $\left(x_{0},y_{0}\right)$ is identical to that of 
		\begin{equation}
			\Omega^{\prime}\left(x,y;\xi_{0},\eta_{0}\right) := \Omega\left(x+x_{0},y+y_{0};\xi_{0},\eta_{0}\right)-\Omega\left(x_{0},y_{0};\xi_{0},\eta_{0}\right).
		\end{equation}
at origin. For notational convenience, we denote $\Omega^{\prime}\left(x,y;\xi_{0},\eta_{0}\right)$ by $\Omega(x,y)$ and write that
		\begin{equation}\label{sici}
			\begin{cases}
				s_{1}:=\sin x_{0},\;c_{1}:=\cos x_{0},\\
				s_{2}:=\sin y_{0},\;c_{2}:=\cos y_{0}.\\
			\end{cases}
		\end{equation}
		Trigonometric identities yeild
		\begin{equation*}
			\begin{aligned}
				\Omega\left(x,y\right)
				=&2c_{1}\left(1-\cos x\right)+2s_{1}\left(\sin x-x\right)+2c_{2}\left(1-\cos y\right)+2s_{2}\left(\sin y-y\right)
				\\
				&+2\left(c_{1}c_{2}-s_{1}s_{2}\right)\left(1-\cos\left(x+y\right)\right)+2\left(c_{1}c_{2}+s_{1}s_{2}\right)\left(1-\cos\left(x-y\right)\right)\\
				&+2\left(c_{1}s_{2}+c_{2}s_{1}\right)\left(\sin\left(x+y\right)-\left(x+y\right)\right)+2\left(c_{1}s_{2}-c_{2}s_{1}\right)\left(\sin\left(x-y\right)-\left(x-y\right)\right).
			\end{aligned}
		\end{equation*}
		Its Hessian matrix  
		\begin{equation*}
			\begin{aligned}
				&\mathrm{Hess}_{\left(0,0\right)}\,\Omega
				=2 
				\begin{pmatrix}
					c_1\left(1+2c_{2}\right) &-2s_{1}s_{2} \\
					-2s_{1}s_{2}&c_{2}\left(1+2c_{1}\right)\\
				\end{pmatrix},
			\end{aligned}
		\end{equation*}
		and the corresponding determinant 
		\begin{align}\label{eq:det}
			\mathrm{det}(\mathrm{Hess}_{(0,0)}\Omega)=4\big(c_1c_2(1+2c_1)(1+2c_2)-4s_1^2s_2^2\big).    
		\end{align}
If the Hessian matrix is non-degenerate, then any local coordinate system is adapted, and hence $h=1$. In the following discussions, it is enough to consider with the degenerate critical points and set 
\begin{equation}\label{eq:det0}
c_1c_2(1+2c_1)(1+2c_2)-4s_1^2s_2^2=0.
\end{equation}
The available $c_1$ and $c_2$ satisfy \eqref{eq:det0} see Figure \ref{fig:det}.

\begin{figure}[H] 
  \centering 
  \includegraphics[width=0.6\textwidth]{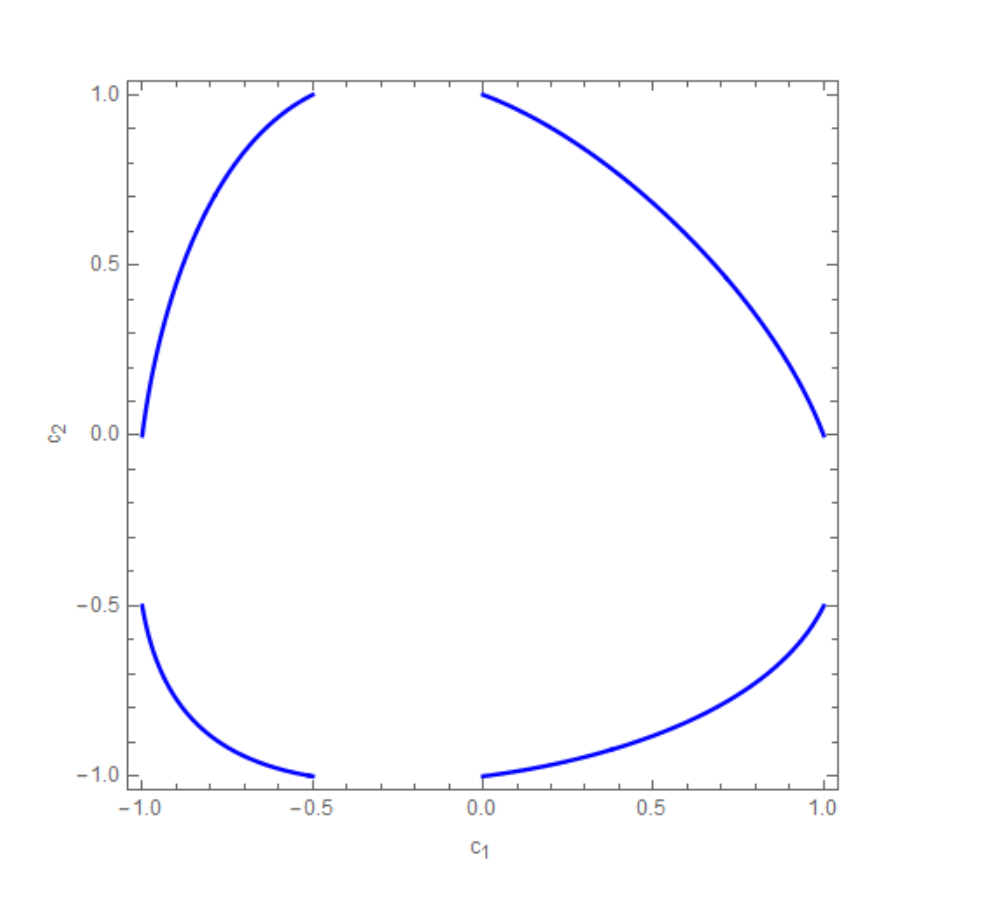} 
  \caption{$c_1c_2(1+2c_1)(1+2c_2)-4s_1^2s_2^2=0$} 
  \label{fig:det} 
\end{figure}

 We claim that $\mathrm{Hess}_{\left(0,0\right)}\Omega$ is not vanishing. Indeed, if $\mathrm{Hess}_{\left(0,0\right)}\Omega=0$, with loss of generality, for off-diagonal elements, we can assume $s_1=0$, then $c_1=\pm 1$ and $c_2=-1/2$. This is contradict to $c_{2}\left(1+2c_{1}\right)=0$, as claimed.
  
		Noting that $c_1=c_2=0$ is not a solution of equation \eqref{eq:det0}, we divide the proof into the follow cases.
		
		{\bf Case I.} Suppose exactly one $c_{i}$ is nonzero. With loss of generality, we assume $c_{2}=0$, then the Hessian matrix is
		\begin{equation*}
			2 
			\begin{pmatrix}
				c_{1} & -2s_{1}s_{2}\\
				-2s_{1}s_{2}&0\\
			\end{pmatrix}.
		\end{equation*}
Equation \eqref{eq:det0} implies $s_1^2=0$, i.e.,
$c_{1}=\pm1$. The Taylor expansion of $\Omega\left(x,y\right)$ is	\begin{equation*}
			c_{1}x^{2}-\frac{c_{1}}{12}x^{4}-\frac{s_{2}}{3}y^{3}+\frac{s_{2}}{60}y^{5}-\frac{c_{1}s_{2}}{3}\left(x+y\right)^{3}-\frac{c_{1}s_{2}}{3}\left(x-y\right)^{3}+\cdots.
	\end{equation*}
		By taking weight $\left(1/2,1/3\right)$ we know the principal part is $c_{1}x^{2}-\frac{s_{2}}{3}y^{3}$, which means that origin is an $A_{2}$ type singularity. It follows from Proposition \ref{proposition: A_{3}-type singularity} that the coordinate system is adapted and the principle face lies in the line $3n_1+2n_2=6$. Therefore the height is $6/5$. 
		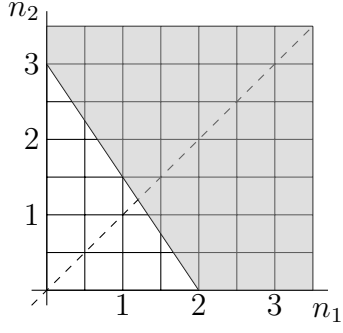
\begin{figure}[H]
			\centering
			\begin{tikzpicture}
				\draw (-0.2,0)--(3.7,0);
				\draw (0,-0.2)--(0,3.7);
				\draw (0,3)--(2,0);
				\draw [dashed](-0.2,-0.2)--(3.5,3.5);
				\draw [step=0.5cm] (0,0) grid (3.5,3.5);
				\draw (1,-0.2) node{1};
				\draw (2,-0.2) node{2};
				\draw (3,-0.2) node{3};
				\draw (-0.2,1) node{1};
				\draw (-0.2,2) node{2};
				\draw (-0.2,3) node{3};
				\fill [fill=gray!50][opacity=0.5] (0,3.5)--(0,3)--(2,0)--(3.5,0)--(3.5,3.5)-- cycle;
			\draw (3.7,-0.3) node{$n_{1}$};
                \draw (-0.3,3.7) node{$n_{2}$};
   \end{tikzpicture}
			\caption{Newton Polyhedron in Case I and Case IIa}\label{figure: NP1}
		\end{figure}

		{\bf Case II.} If $c_{1},c_{2}\neq 0$. 
	
		{\bf Case IIa.} Suppose $c_{1}=-1/2$. Taking use of \eqref{eq:det0}, we obtain $c_{2}= \pm 1$ and $s_{2}= 0$. The Taylor expansion is 
		$$c_1(1+2c_2)x^2-\frac{s_1}{3}x^3-2c_2x^2y-\frac{2s_1c_2}{3}y^3+\cdots.$$
		The fact that origin is an $A_{2}$ type singularity comes from setting weight $\left(1/2,1/3\right)$ with principal part $c_1(1+2c_2)x^{2}-\frac{2s_1c_2}{3}y^3$. By Proposition \ref{proposition: A_{3}-type singularity}, we deduce that the coordinate system is adapted and the principle face lies in the line $3n_1+2n_2=6$, which implies that the height is $6/5$. The discussion in the case of $c_{2}=-1/2$ is similar. 
		
		{\bf Case IIb.} Suppose $c_1\neq-1/2$ and $c_2\neq-1/2$. The Taylor expansion is 
  $$c_1(1 + 2c_2) x^2 + c_2(1 + 2c_1 ) y^2 - 4s_1 s_2 x y - \frac{1}{3}(s_1 + 2c_1 s_2) x^3 - 2c_2 s_1 x^2 y - 2c_1 s_2 x y^2 - \frac{1}{3}(2c_2 s_1 + s_2) y^3+\cdots.$$
   Changing of variables by $u=x-\frac{2s_1s_2}{c_1(1+2c_2)}y$ and $v=y$, we have
  \begin{align*}
      \Omega\left(u,v\right)=&\left(2c_1c_2+c_1\right)u^2+\alpha uv^2
      +\left(-\frac{2s_1^2s_2}{c_1(2c_2+1)}-\frac{4s_1s_2^2}{2c_2+1}-2c_2s_1 \right)u^2v\\
      &+\left(-\frac{2c_1s_2}{3}- \frac{s_1}{3}\right)u^3+\beta v^3+\gamma v^4 +\cdots.
  \end{align*}
  where \begin{equation}\label{coe:uv2}
      \alpha=-\frac{4s_1^3s_2^2}{c_1^2(2c_2+1)^2}-\frac{8s_1^2s_2^3}{c_1(2c_2+1)^2}-\frac{8c_2s_1^2s_2}{c_1(2c_2+1)}-2c_1s_2,
  \end{equation}
\begin{equation}\label{coe:v3}
   \beta=-\frac{8s_1^4s_2^3}{3c_1^3(2c_2+1)^3}-\frac{16s_1^3s_2^4}{3c_1^2(2c_2+1)^3}-\frac{8c_2s_1^3s_2^2}{c_1^2(2c_2+1)^2}-\frac{4s_1s_2^2}{2c_2+1} -\frac{2c_2s_1}{3}-\frac{s_2}{3},
\end{equation}
and \begin{equation}\label{coe:v4}
   \gamma= \frac{16 s_1^4 s_2^4}{3 c_1^3 (2 c_2+1)^3}-\frac{8 c_2 s_1^4 s_2^4}{3 c_1^3 (2 c_2+1)^4}-\frac{4 s_1^4 s_2^4}{3 c_1^3 (2 c_2+1)^4}+\frac{4 s_1^2 s_2^2}{3 c_1 (2 c_2+1)}-\frac{4 c_2 s_1^2 s_2^2}{c_1 (2 c_2+1)^2}-\frac{c_1 c_2}{6} - \frac{c_2}{12}.
\end{equation}
We  denote that 
  \begin{equation*}
      \Gamma_1:=\left\{\left(c_1,c_2\right)\in\left(\left[-1,1\right]\setminus\left\{0,-1/2\right\}\right)^{2}:\mathrm{Equation}\;\eqref{eq:det0}\;\mathrm{holds}\right\},
  \end{equation*}
  \begin{equation*}
      \Gamma_2:=\left\{\left(c_1,c_2\right)\in\left(\left[-1,1\right]\setminus\left\{0,-1/2\right\}\right)^{2}:\beta=0\right\},
  \end{equation*}
    \begin{equation*}
      \Gamma_3:=\left\{\left(c_1,c_2\right)\in\left(\left[-1,1\right]\setminus\left\{0,-1/2\right\}\right)^{2}:\alpha=0\right\},
  \end{equation*}
  and
      \begin{equation*}
      \Gamma_4:=\left\{\left(c_1,c_2\right)\in\left(\left[-1,1\right]\setminus\left\{0,-1/2\right\}\right)^{2}:\alpha^2-4(2c_1c_2+c_1)\gamma=0\right\}.
  \end{equation*}
  
Due to the calculations provided in the Appendix \ref{appendix}, we derive
\begin{itemize}\label{it:1}
    \item $\Gamma_{1}\cap\Gamma_{2}\neq \emptyset$,
    \item $\Gamma_{1}\cap\Gamma_{2}\cap\Gamma_{3}=\emptyset$,
    \item $\Gamma_{1}\cap\Gamma_{2}\cap\Gamma_{4}=\emptyset$.
\end{itemize}
There are only two possible types of singularity arise. When the coefficient of $v^3$ does not vanish, in this case, after setting weight $\left(1/2,1/3\right)$, we deduce this is an $A_{2}$ type singularity. The Newton polyhedron is drown in Figure \ref{figure: NP1}. 

In the case of the coefficient of $uv^2$ is non-zero, and by taking weight $\left(1/2,1/4\right)$, we see origin is an $A_{3}$ type singularity and part of the Newton polyhedron is convex hull of $\left(\R^{2}_{+}+\left(1,2\right)\right)\cup\left(\R^{2}_{+}+\left(2,0\right)\right)$, drawn in the following Figure \ref{figure: NP2bA4}.
\begin{figure}[H]
	\centering
	\begin{tikzpicture}
		\draw (-0.2,0)--(3.7,0);
		\draw (0,-0.2)--(0,3.7);
		\draw (1,2)--(2,0);
            \draw [dashed](0,4)--(1,2);
            \draw [dashed](-0.2,-0.2)--(3.5,3.5);
		\draw [step=0.5cm] (0,0) grid (3.5,4.6);
		\draw (1,-0.2) node{1};
		\draw (2,-0.2) node{2};
		\draw (3,-0.2) node{3};
		\draw (-0.2,1) node{1};
		\draw (-0.2,2) node{2};
		\draw (-0.2,3) node{3};
            \draw (-0.2,4) node{4};
		\fill [fill=gray!50][opacity=0.5] (1,4.5)--(1,2)--(2,0)--(3.5,0)--(3.5,4.5)-- cycle;
		\draw (3.7,-0.3) node{$n_{1}$};
                \draw (-0.3,4.7) node{$n_{2}$};\end{tikzpicture}
		\caption{Newton Polyhedron in adapted coordinates}\label{figure: NP2bA4}
  \end{figure}
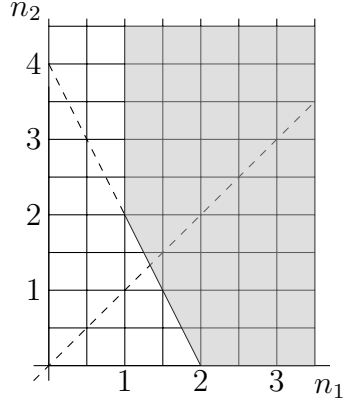
\noindent Noting that the principal part is
\begin{equation*}
    \begin{aligned}
        \Omega_{\mathrm{pr}}\left(u,v\right)=\left(2c_1c_2+c_1\right)u^2+\alpha uv^2+\gamma v^4,
    \end{aligned}
\end{equation*} we obtain the principal face lied on the line $2n_1+n_2=4$. In order to apply Proposition \ref{prop:V0.8}, we examine the associated polynomial
\begin{equation*}
    \begin{aligned}
        P_{\mathrm{pr}}\left(u,v\right)=\left(2c_1c_2+c_1\right)u^2+\alpha u+\gamma.
    \end{aligned}
\end{equation*}
As $\Gamma_{1}\cap\Gamma_{2}\cap\Gamma_{4}=\emptyset$, we have that the discriminant of $P_{\mathrm{pr}}\left(u,v\right)$, 
\begin{equation}\label{coe:discriminant}
D(P_{\mathrm{pr}}(u,v))=\alpha^2-4(2c_1c_2+c_1)\gamma,
\end{equation}
is non-zero. That is, $P_{\mathrm{pr}}(u,v)$ does not have a real root of multiplicity larger than $1$. By Proposition \ref{prop:V0.8}, we obtain that this is an adapted coordinate and the height $h$ is $4/3$.
\end{proof}

	\section{Strichartz estimate and some applications}\label{sec4}
	\subsection{Strichartz estimate}\label{subsec3.1}
	We first fix some standard notations.
	\begin{definition}
		For the function $f:\Z^{d}\times\R_{>0}\to\C$, denote by $\norm{f}_{L^{q}_{t}l^{p}_{x}\left(\R_{>0}\times\Z^{d}\right)}$ be the space-time mix Lebesgue norm of $f$, defined as 
		\begin{equation*}
			\norm{f}_{L^{q}_{t}l^{p}_{n}\left(\left(0,T\right)\times\Z^{d}\right)}:=\left(\int_{\left(0,T\right)}\left(\sum_{n\in\Z^{d}}\left|f\left(n,t\right)\right|^{p}\right)^{q/p}\d t\right)^{1/q}.
		\end{equation*}
	\end{definition}

 Let $u\in C^1([0,+\infty);l^2(\mathbb{Z}^3))$ be a solution of the equation \ref{eq:DS},  the following energy identity comes from the well-known $l^2$ conservation law:
 \begin{equation*}
		\norm{e^{it\Delta_{disc}}u_{0}}_{l^{2}\left(\Z^{3}\right)}=\norm{u_{0}}_{l^{2}\left(\Z^{3}\right)},
	\end{equation*}
	where $t\in\R$ and $u_{0}\in l^{2}\left(\Z^{3}\right)$. To give its proof, one just needs to check that
	\begin{equation*}
		\frac{\d}{\d t}\left(\norm{e^{it\Delta_{\mathrm{disc}}}u_{0}}_{l^{2}\left(\Z^{3}\right)}^{2}\right)=2\RE\left(\Inpro{\partial_{t}u\left(t\right)}{u\left(t\right)}{l^{2}\left(\Z^{3}\right)}\right)=2\RE\left(\Inpro{i\Delta_{\mathrm{disc}}u\left(t\right)}{u\left(t\right)}{l^{2}\left(\Z^{3}\right)}\right)=0.
	\end{equation*}
	Moreover, it's easy to verify that $\left(e^{it\Delta_{\mathrm{disc}}}\right)^{*}=e^{-it\Delta_{\mathrm{disc}}}$. Indeed, for any $u,v\in l^{2}\left(\Z^{3}\right)$, we have
	\begin{equation*}
		\Inpro{e^{it\Delta_{\mathrm{disc}}}u}{v}{l^{2}\left(\Z^{3}\right)}=\Inpro{\widehat{u}\psi_{t}}{\widehat{v}}{l^{2}\left(\Z^{3}\right)}=\Inpro{\widehat{u}}{\psi_{-t}\widehat{v}}{l^{2}\left(\Z^{3}\right)}=\Inpro{u}{e^{-it\Delta_{\mathrm{disc}}}v}{l^{2}\left(\Z^{3}\right)}.
	\end{equation*}
	Using this fact and the $l^{1}\to l^{\infty}$ estimate we obtain that
	\begin{equation*}
		\norm{e^{it\Delta}\left(e^{is\Delta}\right)^{*}u}_{l^{1}\left(\Z^{3}\right)}=\norm{e^{i\left(t-s\right)\Delta}u}_{l^{1}\left(\Z^{3}\right)}\lesssim\JP{t-s}^{-1}\norm{u}_{l^{\infty}\left(\Z^{3}\right)}.
	\end{equation*}
	For deriving the Strichartz estimate of \eqref{equation: inhomogegeneous Schrodinger equation}, we need to apply the following well-known result from Keel and Tao which has shown in \cite{KT98}. We recall the definition of $\sigma-$admissible as follows.
	\begin{definition}
		Let $\sigma$ be a given parameter. We say that a pair of exponents $\left(q,r\right)$ is Strichartz $\sigma-$admissible, if $q,r\ge 2$, $\left(q,r,\sigma\right)\neq\left(2,\infty,1\right)$ and $1/q+\sigma/r\leq \sigma/2$.
	\end{definition}
	\begin{theorem}[Keel and Tao, Theorem 1.2. \cite{KT98} ]\label{thm:KT}
		Let $H$ be a Hilbert space, $\left(X,\d x\right)$ be a measure space and $U\left(t\right):H\to L^{2}\left(X\right)$ be a one parameter family of mappings, which obey the energy estimate
		\begin{equation*}
			\norm{U\left(t\right)f}_{L^{2}_{x}}\lesssim \norm{f}_{H}
		\end{equation*}q
		and the decay estimate
		\begin{equation*}
			\norm{U\left(t\right)\left(U\left(s\right)\right)^{*}g}_{L^{\infty}\left(X\right)}\lesssim\JP{t-s}^{-\sigma}\norm{g}_{L^{1}\left(X\right)},
		\end{equation*}
		for some $\sigma>0$. Then,
		\begin{equation*}
			\begin{aligned}
				&\norm{U\left(t\right)f}_{L_{t}^{q}L_{x}^{r}}\lesssim\norm{f}_{L^{2}},\\
				&\norm{\int\left(U\left(t\right)\right)^{*}F\left(t,\cdot\right)\d t}_{H}\lesssim \norm{F}_{L^{q^{\prime}}_{t}L^{r^{\prime}}_{x}},\\
				&\norm{\int^{t}_{0}U\left(t\right)\left(U\left(s\right)\right)^{*}F\left(s,\cdot\right)\d s}_{L^{q}_{t}L^{r}_{x}}\lesssim\norm{F}_{L^{\tilde{q}^{\prime}}_{t}L^{\tilde{r}^{\prime}}_{x}},\\
			\end{aligned}
		\end{equation*}
		where exponent pairs $\left(q,r,\sigma\right)$ and $\left(\tilde{q},\tilde{r},\sigma\right)$ are Strichartz $\sigma-$admissible.
	\end{theorem}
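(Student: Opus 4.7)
The plan is to follow the classical $TT^*$ scheme of Strichartz, extended to the endpoint by Keel--Tao. First, I would reduce the three conclusions to a single bilinear inequality. Setting $Tf(t):=U(t)f$, the energy hypothesis says $T$ is bounded from $H$ to $L^2_t L^2_x$; by duality, $T^*G = \int (U(t))^* G(t,\cdot)\,dt$. The homogeneous bound on $T$ is equivalent to the dual bound $\|T^*G\|_H\lesssim \|G\|_{L^{q'}_t L^{r'}_x}$, and composing them ($TT^*$) yields the inhomogeneous estimate on the diagonal $\tilde q=q$, $\tilde r=r$. The full off-diagonal case will then follow from the Christ--Kiselev lemma, which converts the unrestricted time integral into the retarded one $\int_0^t$ provided $q>\tilde q'$.

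Second, I would interpolate the two hypotheses. By $TT^*$ the energy assumption gives $\|U(t)U(s)^*g\|_{L^2_x}\lesssim \|g\|_{L^2_x}$, while the decay assumption is $\|U(t)U(s)^*g\|_{L^\infty_x}\lesssim \langle t-s\rangle^{-\sigma}\|g\|_{L^1_x}$. Riesz--Thorin interpolation yields, for every $r\in[2,\infty]$,
\begin{equation*}
\|U(t)U(s)^*g\|_{L^r_x}\lesssim \langle t-s\rangle^{-\sigma(1-2/r)}\|g\|_{L^{r'}_x}.
\end{equation*}

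Third, in the strictly admissible range (where $1/q+\sigma/r<\sigma/2$ strictly), I would apply Minkowski's inequality in $L^r_x$ and then the Hardy--Littlewood--Sobolev fractional integration inequality in time. The HLS exponent identity matches the admissibility condition $2/q=\sigma(1-2/r)$ exactly, giving $\|\int U(t)U(s)^*F(s)\,ds\|_{L^q_t L^r_x}\lesssim \|F\|_{L^{q'}_t L^{r'}_x}$; combined with $TT^*$ and duality, this recovers the homogeneous and dual estimates in the non-endpoint regime.

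The main obstacle is the sharp endpoint $q=2$, which is relevant here because $\sigma=13/12>1$ admits an endpoint pair and where HLS fails. Following Keel--Tao, I would introduce the bilinear form
\begin{equation*}
B(F,G)=\iint_{s<t}\langle U(t)U(s)^*F(s),\,G(t)\rangle_{L^2_x}\,ds\,dt,
\end{equation*}
decompose it dyadically as $B=\sum_j B_j$ with $|t-s|\sim 2^j$, establish on each piece $B_j$ four bilinear bounds at exponent pairs placed slightly off the endpoint in complementary directions (obtained from the interpolated decay combined with the trivial $L^2_x$ bound), and then perform a bilinear real-interpolation together with a Whitney-type summation over $j$ to recover the endpoint bound $q=2$. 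The Christ--Kiselev lemma then delivers the off-diagonal inhomogeneous conclusion in its full generality for any two admissible pairs.
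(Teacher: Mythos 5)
This statement is quoted from Keel and Tao \cite{KT98} and is not proved in the paper; the authors use it as a black box (via the $TT^*$/Duhamel machinery) to deduce Theorem~\ref{thm:Strichartz}. There is therefore no proof in this paper to compare against. Your sketch is nevertheless a faithful outline of the argument in \cite{KT98}: the reduction to a bilinear form, the Riesz--Thorin interpolation between the energy and dispersive hypotheses, HLS on the sharp-admissibility line for $q>2$, the dyadic decomposition in $|t-s|$ with bilinear real interpolation at the endpoint $q=2$, and Christ--Kiselev (or an equivalent positivity argument) for the retarded/off-diagonal estimate. Two small corrections. First, the energy hypothesis $\|U(t)f\|_{L^2_x}\lesssim\|f\|_H$ gives $T:H\to L^\infty_t L^2_x$, not $L^2_t L^2_x$ as you wrote; the $L^q_t L^r_x$ mapping is then built up via interpolation, not read off from the energy bound. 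Second, the theorem as stated here uses the Japanese-bracket decay $\langle t-s\rangle^{-\sigma}$ and the \emph{inequality} $1/q+\sigma/r\le\sigma/2$ in the definition of $\sigma$-admissibility; your HLS step only covers the equality case. For strictly sub-sharp pairs HLS does not directly apply; one instead observes that $\langle t\rangle^{-\sigma(1-2/r)}\in L^{q/2}_t(\mathbb{R})$ whenever $2/q<\sigma(1-2/r)$, so Young's convolution inequality closes that range. With those adjustments the sketch matches the standard proof of Keel--Tao (truncated-decay version).
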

	Using the Keel-Tao argument and our $l^{1}\to l^{\infty}$ estimate, we get that
	
	\begin{proof}[Proof of Theorem \ref{equation: inhomogegeneous Schrodinger equation}]
		By the Duhamel's principle we have that for the solution $u$, 
		\begin{equation*}
			u\left(t\right)=e^{it\Delta_{\mathrm{dist}}}u_{0}+\int_{0}^{t}e^{i\left(t-s\right)\Delta_{\mathrm{dist}}}F\left(s\right)\d s.
		\end{equation*}
		Setting $X=\Z^{3}$, measure $\d x$ be the counting measure and $H$ be the Hilbert space $l^{2}\left(\Z^{3}\right)$, it follows from Theorem \ref{thm:KT} that\begin{equation*}
			\begin{aligned}
				\norm{u}_{L^{q}_{t}l^{r}_{n}\left(\left(0,T\right)\times\Z^{3}\right)}\leq&\norm{e^{it\Delta_{\mathrm{dist}}}u_{0}}_{L^{q}_{t}l^{r}_{n}\left(\left(0,T\right)\times\Z^{3}\right)}+\norm{\int_{0}^{t}e^{i\left(t-s\right)\Delta_{\mathrm{dist}}}F\left(s\right)\d s}_{L^{q}_{t}l^{r}_{n}\left(\left(0,T\right)\times\Z^{3}\right)}\\
				\lesssim&\norm{u_{0}}_{l^{2}\left(\Z^{3}\right)}+\norm{F}_{L^{\tilde{q}^{\prime}}_{t}l^{\tilde{r}^{\prime}}_{n}\left(\left(0,T\right)\times\Z^{3}\right)},
			\end{aligned}
		\end{equation*}
		as desired.
	\end{proof}

	\subsection{global well-posedness for small data}\label{subsec3.2}
	
	With the help of Theorem \ref{thm:Strichartz}, we can establish the global existence of the solution to discrete nonlinear Schr\"odinger equations (DNLS) with a small initial data on LKG.

     \begin{eqnarray}\label{DNLS}
   \left\{\begin{aligned}&\partial_tu(x,t)-i\Delta_{\mathrm{disc}}u(x,t)\pm i|u(x,t)|^{2a}u(x,t)=0,~~\forall (x,t)\in\Z^{3}\times[0,\infty) \\
    &u(\cdot,0)=u_0\in l^2(\mathbb{Z}^3).
 \end{aligned}
   \right.
   \end{eqnarray}
\begin{theorem}
    Let $a\ge 12/13$. If there exists $\epsilon>0$ and a positive constant $C$ such that $\|u_0\|_{l^2}\le \epsilon$, then \eqref{DNLS} has a global solution $u\in C^1((0,\infty];l^2(\mathbb{Z}^3))$. Moreover,
    \[
    \|u_t\|_{L^ql^r\left(\left(0,\infty\right)\times\Z^{3}\right) }\le C\epsilon,
    \]
holds for all $13/12$-admissible pairs $(q,r)$. In particular, 
\[
\lim_{t\rightarrow\infty}\|u(t)\|_{l^r(\mathbb{Z}^3)}=0, \ \mathrm{for \ all}\  r>2.
\]
\end{theorem}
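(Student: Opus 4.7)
The plan is to apply the standard Banach fixed-point argument in a Strichartz space, powered by Theorem \ref{thm:Strichartz} and the dispersive estimate of Theorem \ref{thm:main}. By Duhamel's principle, a solution of \eqref{DNLS} corresponds to a fixed point of
\[
\Phi(u)(t) := e^{it\Delta_{\mathrm{disc}}} u_0 \mp i \int_0^t e^{i(t-s)\Delta_{\mathrm{disc}}}\bigl(|u(s)|^{2a} u(s)\bigr)\,\d s,
\]
which I will produce on a small ball in $X := C([0,\infty); l^2(\Z^3)) \cap L^q_t l^r_n((0,\infty)\times \Z^3)$ for a suitable $\sigma$-admissible pair $(q,r)$ with $\sigma = 13/12$.

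The crucial algebraic step is selecting compatible pairs $(q,r)$ and $(\tilde q, \tilde r)$. H\"older's inequality on the nonlinearity gives
\[
\bigl\||u|^{2a}u\bigr\|_{L^{\tilde q'}_t l^{\tilde r'}_n} = \|u\|_{L^{(2a+1)\tilde q'}_t l^{(2a+1)\tilde r'}_n}^{2a+1},
\]
so I impose $(2a+1)\tilde q' = q$ and $(2a+1)\tilde r' = r$. Rewriting the admissibility of $(\tilde q, \tilde r)$ in terms of $(q,r,a)$ and combining with the admissibility of $(q,r)$ forces
\[
1 + \tfrac{\sigma}{2} \;\leq\; (2a+1)\Bigl(\tfrac1q + \tfrac\sigma r\Bigr) \;\leq\; (2a+1)\tfrac\sigma 2,
\]
that is, $2a\sigma \geq 2$, or $a \geq 1/\sigma = 12/13$, which is precisely the hypothesis. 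A concrete workable choice meeting this with equality is the diagonal pair $q = r = 2a+2$.

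With the pair fixed, Theorem \ref{thm:Strichartz} combined with the pointwise inequality $\bigl||u|^{2a}u - |v|^{2a}v\bigr| \lesssim (|u|^{2a} + |v|^{2a})|u-v|$ yields
\[
\|\Phi(u)\|_X \leq C\|u_0\|_{l^2} + C\|u\|_{L^q_t l^r_n}^{2a+1}, \qquad \|\Phi(u)-\Phi(v)\|_{L^q_t l^r_n} \leq C R^{2a} \|u-v\|_{L^q_t l^r_n},
\]
on the ball $B_R \subset X$. Choosing $R = 2C\epsilon$ and $\epsilon$ small enough that $2C(2C\epsilon)^{2a} < 1/2$, the map $\Phi$ becomes a strict contraction on $B_R$ that preserves the ball, producing the unique global solution $u$ with $\|u\|_{L^q_t l^r_n} \leq 2C\epsilon$; a further application of Theorem \ref{thm:Strichartz} to this $u$ extends the estimate to every other admissible pair, and the $C^1$ regularity is a standard consequence of the integral equation and the conservation of the $l^2$ norm.

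For the decay statement, I define the scattering state $u_+ := u_0 \mp i \int_0^\infty e^{-is\Delta_{\mathrm{disc}}}(|u|^{2a}u)(s)\,\d s$, which lies in $l^2(\Z^3)$ by a dual Strichartz estimate. Then $\|u(t) - e^{it\Delta_{\mathrm{disc}}} u_+\|_{l^2}$ is controlled by the Strichartz tail $\|u\|_{L^q_s l^r_n((t,\infty))}^{2a+1}$, which vanishes as $t \to \infty$ by dominated convergence. Since $\|\cdot\|_{l^r} \leq \|\cdot\|_{l^2}$ on $\Z^3$ for $r \geq 2$, it remains to show $\|e^{it\Delta_{\mathrm{disc}}} u_+\|_{l^r} \to 0$; approximating $u_+$ in $l^2$ by finitely supported sequences $v_\delta \in l^1$ and interpolating the dispersive decay $\JP{t}^{-13/12}$ of Theorem \ref{thm:main} with mass conservation yields the result for every $r > 2$. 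The main obstacle I anticipate is the borderline case $a = 12/13$: both admissibility inequalities must then be saturated simultaneously, pinning $(q,r)$ essentially to the diagonal choice $q = r = 50/13$ with no slack; one must verify this single pair threads the needle, in particular avoiding the forbidden endpoint $(2,\infty)$, which is automatic here since $\sigma \neq 1$, while for $a > 12/13$ the entire problem is considerably more flexible since an open region of admissible exponents is available.
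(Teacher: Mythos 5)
Your argument is correct and runs parallel to the paper's, with two genuine points of divergence worth noting.

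The core of both proofs is a Banach fixed point in a Strichartz space, and the nonlinearity is handled by the identity $\||u|^{2a}u\|_{l^{\tilde r'}} = \|u\|_{l^{(2a+1)\tilde r'}}^{2a+1}$. You derive the threshold $a\geq 1/\sigma = 12/13$ by intersecting the admissibility of $(q,r)$ with that of $(\tilde q,\tilde r)$ under the duality constraint $(2a+1)\tilde q'=q$, $(2a+1)\tilde r'=r$, then close the contraction with the symmetric diagonal pair $q=r=\tilde q=\tilde r=2a+2$. The paper instead sets $\mathcal{X}$ to be the ball with respect to the supremum over all admissible norms, picks the trivially admissible $(\tilde q,\tilde r)=(\infty,2)$ so $\tilde q'=1$, $\tilde r'=2$, and H\"older gives the pair $(q,r)=(2a+1,2(2a+1))$, which is $13/12$-admissible precisely when $a\geq 12/13$. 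Both pairs sit on the sharp admissibility line when $a=12/13$; at the borderline the condition $\frac{1}{q}+\frac{\sigma}{r}=\frac{\sigma}{2}$ together with $2a+1\leq q,r\leq 2(2a+1)$ is saturated along a short segment rather than a single point (both $(50/13,50/13)$ and $(37/13,74/13)$ are valid), so your remark that the diagonal is forced with ``no slack'' overstates the rigidity, but this has no bearing on the correctness of the proof since you only need one workable pair. The more substantive difference is the final decay claim $\lim_{t\to\infty}\|u(t)\|_{l^r}=0$ for $r>2$: the paper asserts it without proof, while you supply a clean scattering argument (construct $u_+\in l^2$ via dual Strichartz, control $\|u(t)-e^{it\Delta}u_+\|_{l^2}$ by a Strichartz tail, and handle $e^{it\Delta}u_+$ by $l^1$-approximation plus interpolation between $\langle t\rangle^{-13/12}$ decay and $l^2$-conservation). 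That addition closes a genuine gap in the published argument.
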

\begin{proof}
   We define a metric space
    \[
    \mathcal{X}=\{u(t):\sup_{(q,r):\mathrm{\frac{13}{12}-admissible}}\|u\|_{L^ql^r\left(\left(0,\infty\right)\times\Z^{3}\right)}<2C\|u_0\|_{l^2(\mathbb{Z}^3)}\},
    \]
 where $C$ is the constant in Theorem \ref{thm:Strichartz}. In the following, we will find a fixed point in $\mathcal{X}$ of an operator $\mathbf{A}$ defined by 
    \[
\mathbf{A}(u)=e^{it\Delta_{\mathrm{disc}}}u_0 \pm i\int_0^te^{i(t-s)\Delta_{\mathrm{disc}}}|u|^{2a}u(s)\mathrm{d}s.
    \]
    Setting $(\tilde{q},\tilde{r})=(\infty,2)$, and by Theorem \ref{thm:Strichartz} we have
    \begin{align*}
        \sup_{(q,r):\mathrm{\frac{13}{12}-admissible}}\|\mathbf{A}u(t)\|_{L^ql^r\left(\left(0,\infty\right)\times\Z^{3}\right)}&\le C\|u_0\|_{l^2(\mathbb{Z}^3)}+C_1\|u^{2a+1}\|_{L^1l^2\left(\left(0,\infty\right)\times\Z^{3}\right)} \\
         &\le C\|u_0\|_{l^2(\mathbb{Z}^3)}+C_1\|u\|^{2a+1}_{L^{2a+1}l^{2(2a+1)}\left(\left(0,\infty\right)\times\Z^{3}\right)}.    \end{align*}
Since $a\ge\frac{12}{13}$, we have $(2a+1,2(2a+1))$ is $13/12$-admissible. Then we obtain
\begin{align*}
    \sup_{(q,r):\mathrm{\frac{13}{12}-admissible}} \|\mathbf{A}u(t)\|_{L^ql^r\left(\left(0,\infty\right)\times\Z^{3}\right)}\le C\|u_0\|_{l^2(\mathbb{Z}^3)}+C_1\|u\|_{\mathcal{X}}^{2a+1}
    \le C\|u_0\|_{l^2(\mathbb{Z}^3)}+C_1(2C\epsilon)^{2a+1}.
\end{align*}
    Letting $\epsilon>0$ be sufficient small,  we derive 
    \[
    \sup_{(q,r):\mathrm{\frac{13}{12}-admissible}}\|\mathbf{A}u(t)\|_{L^ql^r\left(\left(0,\infty\right)\times\Z^{3}\right)}\le2C\|u_0\|_{l^2(\mathbb{Z}^3)},
    \]
which implies $\mathbf{A}(\mathcal{X})\subseteq \mathcal{X}$ given that $\|u_0\|_{l^2(\mathbb{Z}^3)}$ is small enough. Similarly, 
letting $u,\tilde{u}\in \mathcal{X}$ with the same initial data $u(\cdot,0)=\tilde{u}(\cdot,0)=u_0,$ we conclude
\begin{align*}
    &\sup_{(q,r):\mathrm{\frac{13}{12}-admissible}}\|\mathbf{A}u-\mathbf{A}\tilde{u}\| \\
    &\le C_0\|u-\tilde{u}\|_{L^{2a+1}l^{2(2a+1)}\left(\left(0,\infty\right)\times\Z^{3}\right)}(\|u\|_{L^{2a+1}l^{2(2a+1)}\left(\left(0,\infty\right)\times\Z^{3}\right)}+\|\tilde{u}\|_{L^{2a+1}l^{2(2a+1)}\left(\left(0,\infty\right)\times\Z^{3}\right)})\\
&\le2CC_1\|u-\tilde{u}\|_{L^{2a+1}l^{2(2a+1)}\left(\left(0,\infty\right)\times\Z^{3}\right)}\|u_0\|_{l^2(\mathbb{Z}^3)}^{2a}.
\end{align*}
Therefore $\mathbf{A}$ is a contraction map given that the $l^2$ norm of $u_0$ is sufficiently small.
This proves the existence of a global solution \eqref{DNLS}.
\end{proof}

	\appendix
\section{Mathematica Scripts for System Consistency Analysis}\label{appendix}
\label{app:mathematica_consistency}

This appendix presents the Mathematica script \cite{Mathematica} used to test the consistency of the algebraic equations derived in Section \ref{sec2} Case (IIb). The analysis involves constructing three distinct systems of equations and using the \texttt{Reduce} function to determine their solvability in the real domain.

The function \texttt{Reduce[expr, vars, Reals]} searches for all real-valued solutions for the variables \texttt{vars} that satisfy the expressions \texttt{expr}. Its methodology, based on Cylindrical Algebraic Decomposition (CAD), reliably finds the complete solution set for polynomial systems. When solutions exist, \texttt{Reduce} returns a logical expression defining the solution space. Critically, if no solution exists, it returns \texttt{False}, which constitutes a definitive proof of the system's inconsistency. This rigorous approach ensures the reliability of our conclusions.

The variables \(c_1, s_1, c_2, s_2\) are defined in \eqref{sici}. The core of the analysis involves three systems:
\begin{itemize}
    \item \textbf{System 1}: Equation \eqref{eq:det0} holds and the coefficient of $v^3$, i.e. \eqref{coe:v3} vanishes.
    \item \textbf{System 2}: Equation \eqref{eq:det0} holds, and the coefficients of $v^3$ and $uv^2$, i.e. \eqref{coe:v3} and \eqref{coe:uv2} vanish.
    \item \textbf{System 3}: Equation \eqref{eq:det0} holds, the coefficient of $v^3$ vanishes, i.e. \eqref{coe:v3}, and the discriminant of $P_{\mathrm{pr}}$, i.e. \eqref{coe:discriminant} vanishes.
\end{itemize}

\subsection*{Mathematica Implementation and Results}

The script below defines all six equations and then systematically tests the three systems.

\begin{lstlisting}[style=mystyle, breaklines=true, caption={System of Equations in Mathematica}]
$eq_1 = c_1^2 + s_1^2 == 1$;
$eq_2 = c_2^2 + s_2^2 == 1$;
$eq_3 = c_1 c_2 (1 + 2c_1) (1 + 2c_2) - 4s_1^2 s_2^2 == 0$;
$eq_4 = -\frac{8 s_1^4 s_2^3}{3 c_1^3 (1 + 2c_2)^3} - \frac{16 s_1^3 s_2^4}{3 c_1^2 (1 + 2c_2)^3} - \frac{8 c_2 s_1^3 s_2^2}{c_1^2 (1 + 2c_2)^2} 
      - \frac{4 s_1 s_2^2}{1 + 2c_2} - \frac{2 c_2 s_1}{3} - \frac{s_2}{3}  == 0 $;
$eq_5 = -\frac{4 s_1^3 s_2^2}{c_1^2 (1 + 2c_2)^2} - \frac{8 s_1^2 s_2^3}{c_1 (1 + 2c_2)^2} - \frac{8 c_2 s_1^2 s_2}{c_1 (1 + 2c_2)} - 2 c_1 s_2 == 0 $;
$eq_6 = (-\frac{4 s_1^3 s_2^2}{c_1^2 (1 + 2c_2)^2} - \frac{8 s_1^2 s_2^3}{c_1 (1 + 2c_2)^2} - \frac{8 c_2 s_1^2 s_2}{c_1 (1 + 2c_2)} - 2 c_1 s_2)^2 - 4(2c_1c_2+c_1)*$
$ 
\ \ \ \ \ \ (\frac{16 s_1^4 s_2^4}{3 c_1^3 (1 + 2c_2)^3} - \frac{8 c_2 s_1^4 s_2^4}{3 c_1^3 (1 + 2c_2)^4} - \frac{4 s_1^4 s_2^4}{3 c_1^3 (1 + 2c_2)^4}
      + \frac{4 s_1^2 s_2^2}{3 c_1 (1 + 2c_2)} - \frac{4 c_2 s_1^2 s_2^2}{c_1 (1 + 2c_2)^2} - \frac{c_1 c_2}{6} - \frac{c_2}{12}) == 0$;

system1 = {$eq_1$, $eq_2$, $eq_3$, $eq_4$, $c_1 \neq 0$, $c_2 \neq 0$, $c_1 \neq -1/2$, $c_2 \neq -1/2$};
system2 = {$eq_1$, $eq_2$, $eq_3$, $eq_4$, $eq_5$, $c_1 \neq 0$, $c_2 \neq 0$, $c_1 \neq -1/2$, $c_2 \neq -1/2$};
system3 = {$eq_1$, $eq_2$, $eq_3$, $eq_4$, $eq_6$, $c_1 \neq 0$, $c_2 \neq 0$, $c_1 \neq -1/2$, $c_2 \neq -1/2$};

solutions1 = Reduce[system1, {$c_1, s_1, c_2, s_2$}, Reals]
solutions2 = Reduce[system2, {$c_1, s_1, c_2, s_2$}, Reals]
solutions3 = Reduce[system3, {$c_1, s_1, c_2, s_2$}, Reals]
\end{lstlisting}

\subsection*{Interpretation of Results}

The script produces the following outputs for each of the three tests:

\paragraph{Result for System 1}
The code finds a valid instance, confirming that the baseline system is consistent.
\begin{verbatim}
A solution exists for System 1. An example is: 
{{c1 -> -0.996..., s1 -> -0.0869..., c2 -> 0.0288..., s2 -> 1.00...}}
\end{verbatim}

\paragraph{Result for System 2}
The \texttt{Reduce} command returns an empty list, triggering the corresponding output.
\begin{verbatim}
No solution was found for System 2 in the real domain.
\end{verbatim}

\paragraph{Result for System 3}
No solution instance can be found.
\begin{verbatim}
No solution was found for System 3 in the real domain.
\end{verbatim}

\section*{Acknowledgement}
  We are grateful to Professors Ikromov Isroil and Liu Shiping for their helpful discussions.

	\bibliographystyle{amsplain}

\end{document}